\newtheorem*{thm*}{Theorem}
\newtheorem{thm}{Theorem}[section]
\newtheorem{cor}[thm]{Corollary}
\newtheorem{lemma}[thm]{Lemma}
\newtheorem{prop}[thm]{Proposition}
\newtheorem*{prop*}{Proposition}
\theoremstyle{definition}
\newtheorem{defn}[thm]{Definition}
\theoremstyle{remark}
\newtheorem{remark}[thm]{Remark}
\newtheorem*{remark*}{Remark}
\def\NR{\mathit{NR}}
\DeclareMathOperator{\Ap}{Ap}
\newcommand{\NZ}{\mathbb{N}_0}
\newcommand{\atext}[1]{\text{\quad #1\quad}}
\newcommand{\gens}[1]{\langle #1 \rangle}
\title{Characterizations of numerical semigroup complements via Ap\'ery sets}
\author{T. Alden Gassert}
\address{Department of Mathematics, Western New England University, Springfield, MA 01119 \linebreak  and \linebreak   Department of Mathematics and Computer Science, Hobart and William Smith Colleges, Geneva, NY 14456}
\email{gassert@hws.edu}
\author{Caleb McKinley Shor}
\address{Department of Mathematics, Western New England University, Springfield, MA 01119}
\email{cshor@wne.edu}
\subjclass[2010]{11D04, 11D07, 20M14}
\keywords{numerical semigroups, Sylvester sums, Frobenius problem, free numerical semigroups, smooth sequences, non-representable numbers, Ap\'ery sets}
\begin{document}

\begin{abstract}
In this paper, we generalize the work of Tuenter to give an identity which completely characterizes the complement of a numerical semigroup in terms of its Ap\'ery sets. Using this result, we compute the $m$th power Sylvester and alternating Sylvester sums for free numerical semigroups. Explicit formulas are given for small $m$.
\end{abstract}

\maketitle

\section{Introduction, preliminaries}

Let $\mathbb{N}_0$ denote the set of non-negative integers.  A \textit{numerical semigroup} $S$ is a subset of $\mathbb{N}_0$ that is closed under addition, contains 0, and has finite complement in $\mathbb{N}_0$. For any nonempty $G\subset\mathbb{N}_0$, let $\gens{G} = \{\sum_{i = 1}^k n_ig_i : k \in \mathbb{N}_0, n_i\in\mathbb{N}_0, g_i \in G\}$, the set of all non-negative linear combinations of elements of $G$.  It is known that $\gens{G}$ is a numerical semigroup if and only if $\gcd(G)=1$.  Furthermore, given any numerical semigroup $S$, there is some finite generating set $G$ such that $S=\langle G\rangle$. These results are well known. For a comprehensive introduction to numerical semigroups, see \cite[Chapter 1]{RosalesGarciaSanchez09}. 

For $S$ a numerical semigroup with $S=\gens{G}$, the complement of $S$ in $\mathbb{N}_0$ is the set of positive integers that are \emph{not representable} in terms of $G$, and we denote this finite set $\NR=\mathbb{N}_0\setminus S$. The \emph{genus} of $S$ is $g(S)=\#\NR$, and the \emph{Frobenius number} of $S$ is $F(S)=\max(\mathbb{Z}\setminus S)$, which is $\max(\NR)$ when $\NR\neq\emptyset$.

The problem of computing the genus and Frobenius number of a numerical semigroup dates back to the late 19th century. In \cite{Sylvester1882}, Sylvester solved the problem when $S$ is generated by two integers. In the 20th century, the idea of the Ap\'ery set of an element in a numerical semigroup was introduced in \cite{Apery1946}. If one knows the Ap\'ery set of an element, then one can then compute the Frobenius number and genus of the numerical semigroup. The formula for the Frobenius number is attributed to Brauer and Shockley in \cite{Brauer1962}, and the genus result is attributed to Selmer in \cite{Selmer1977}. In general, the Ap\'ery set of an element in a numerical semigroup is difficult to compute.

In \cite{Tuenter06}, Tuenter remarks that one can recover a set if one knows the images of the set under sufficiently many mappings.  In particular, he notes that the Sylvester sums of $\NR$ are sufficient to completely determine $\NR$, and he gives an identity (\cite[Theorem 2.1]{Tuenter06}) that allows for a simple and efficient computation for these sums in the case that $\NR = \NZ \setminus \gens{a,b}$ (that is, $\NR$ is the complement of a semigroup generated by two elements).  In this paper, we give a reformulation of Tuenter's identity for general semigroups in terms of Ap\'ery sets (Theorem \ref{thm:tuenter-apery}).  Ap\'ery sets (at least for certain values in $S$) are well understood when $S$ is free, thus our identity gives a complete characterization of $\NR$ for free numerical semigroups.

This paper is organized as follows. In Section \ref{sec:identity}, we define Ap\'ery sets and use them to prove Theorem \ref{thm:tuenter-apery}, the identity which characterizes $\NR$. We then use the identity to produce a well-known formula for the Hilbert series of a numerical semigroup in terms of the Ap\'ery set of any element in the semigroup. In Section \ref{sec:smooth}, we describe free numerical semigroups and their Ap\'ery sets. Such numerical semigroups include those generated by compound sequences, geometric sequences, supersymmetric sequences, and two-element sets. Finally, in Section \ref{sec:sylvester sum}, we give an explicit version of the identity for free numerical semigroups, and we use it to compute the $m$th power and alternating $m$th power Sylvester sums of these semigroups.

\section{An identity for Ap\'ery sets}\label{sec:identity}

\begin{defn}
Let $S$ be a numerical semigroup. For any nonzero $t\in S$, the \emph{Ap\'ery set} of $t$ in $S$ is defined as 
\[\Ap(S;t)=\left\{s\in S : s-t\not\in S \right\}.\]  
Equivalently, if we let $w_i=\min\{s\in S : s\equiv i\pmod{t}\}$, then $\Ap(S;t)=\left\{w_i : 0\le i<t\right\}$. In other words,  $\Ap(S;t)$ consists of the minimal element of $S$ in each congruence class modulo $t$.
\end{defn}

Ap\'ery sets are incredibly helpful in understanding properties of a numerical semigroup and its complement.  For instance, given a numerical semigroup $S$ and some $t\in S$, one has the following formulas for the genus \cite[page 3]{Selmer1977} and  Frobenius number \cite[Lemma 3]{Brauer1962} of $S$ in terms of $\Ap(S;t)$:
\begin{equation}\label{eqn:selmer-genus}
g(S)=-\frac{t-1}{2}+\frac{1}{t}\sum_{n\in\Ap(S;t)}n
\end{equation} 
and 
\begin{equation}\label{eqn:brauer-frobenius} 
F(S)=\max(\Ap(S;t))-t.
\end{equation}

With Ap\'ery sets, we have an identity which can be seen as a generalization of \cite[Theorem 3.3]{GassertShor16} (Corollary \ref{cor:compound-tuenter} in this paper), which itself is a generalization of \cite[Theorem 2.1]{Tuenter06} (Corollary \ref{cor:tuenter} in this paper). We first need a lemma.
\begin{lemma}\label{lemma:set-equalities}
Let $S$ be a numerical semigroup with complement $\NR=\mathbb{N}_0\setminus S$. For any nonzero $t\in S$, let $(\NR+t):=\{n+t:n\in\NR\}$. For any $a,b\in\mathbb{Z}$, let $I_{a,b}:=\{n\in\mathbb{Z} : a\le n<b\}$. Finally, let $\Ap_t:=\{s\in\Ap(S;t):s<t\}=\Ap(S;t)\cap I_{0,t}.$
Then
\begin{equation}\label{eqn:set-equality-1}(\NR+t)\setminus\NR=\Ap(S;t)\setminus\Ap_t\end{equation}
and 
\begin{equation}\label{eqn:set-equality-2} \NR\setminus(\NR+t)=I_{0,t}\setminus\Ap_t.\end{equation}
\end{lemma}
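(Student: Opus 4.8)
The plan is to prove both set identities \eqref{eqn:set-equality-1} and \eqref{eqn:set-equality-2} by a direct membership chase, translating each set-builder condition into statements about whether shifted integers lie in $S$, and then invoking the two structural facts that make everything collapse: that $S$ is closed under addition, and that every element of $S$ lying below $t$ is automatically in the Ap\'ery set.

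First I would record the reformulation $\Ap_t = S\cap I_{0,t}$. Indeed, if $s\in S$ and $0\le s<t$, then $s-t<0$, so $s-t\notin S$ and hence $s\in\Ap(S;t)$; conversely every element of $\Ap_t$ is by definition an element of $S$ lying in $I_{0,t}$. Thus $\Ap_t$ is simply the set of elements of $S$ strictly below $t$, and consequently $I_{0,t}\setminus\Ap_t = I_{0,t}\setminus S$ is the set of non-representable integers below $t$, while $\Ap(S;t)\setminus\Ap_t = \{s\in\Ap(S;t):s\ge t\}$.

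For \eqref{eqn:set-equality-1}, I would chase membership. An integer $x$ lies in $(\NR+t)\setminus\NR$ exactly when $x\in S$ and $x-t\in\NR$; since $\NR=\mathbb{N}_0\setminus S$, the latter unpacks to $x-t\ge 0$ and $x-t\notin S$. Combining, $x\in(\NR+t)\setminus\NR$ if and only if $x\in S$, $x\ge t$, and $x-t\notin S$, which is precisely the condition that $x\in\Ap(S;t)$ together with $x\ge t$, i.e.\ $x\in\Ap(S;t)\setminus\Ap_t$ by the reformulation above. For \eqref{eqn:set-equality-2}, the key step---the only place closure under addition is essential---is the observation that if $x\notin S$ then $x-t\notin S$ as well, since otherwise $x=(x-t)+t$ would be a sum of two elements of $S$. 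An integer $x$ lies in $\NR\setminus(\NR+t)$ if and only if $x\in\NR$ and $x-t\notin\NR$. The condition $x\in\NR$ forces $x\in\mathbb{N}_0\setminus S$, whence $x-t\notin S$ automatically, so $x-t\notin\NR$ can fail only through $x-t<0$, i.e.\ $x<t$; conversely $x<t$ makes $x-t<0\notin\NR$. Hence the set is exactly $\{x: 0\le x<t,\ x\notin S\}=I_{0,t}\setminus S=I_{0,t}\setminus\Ap_t$.

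The arguments are elementary, so I do not expect a genuine obstacle; the only real care is bookkeeping at the boundary $x=t$, confirming that the dichotomy $x\ge t$ versus $x<t$ matches the split $\Ap(S;t)\setminus\Ap_t$ versus $\Ap_t$ with nothing double-counted, and keeping track of the nonnegativity of the shifted quantities that lets one pass between ``$x-t\in\NR$'' and ``$x-t\ge 0$ and $x-t\notin S$.''
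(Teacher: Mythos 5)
Your proof is correct and follows essentially the same route as the paper's: an elementary membership chase using closure of $S$ under addition, positivity of the gaps, and the fact that any element of $S$ lying below $t$ belongs to $\Ap(S;t)$. The only difference is organizational --- you isolate the reformulation $\Ap_t = S\cap I_{0,t}$ up front and argue by biconditional chains, while the paper proves the four inclusions separately (using the minimal-element-per-congruence-class characterization of $\Ap(S;t)$ where you use $s-t<0$); the underlying steps are identical.
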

\begin{proof}
To prove these equalities, we will show the sets are subsets of each other.

For Equation \eqref{eqn:set-equality-1}, to show inclusion we suppose $s\in (\NR+t)\setminus\NR$. Note that $t>0$. Since $s\in(\NR+t)$, $s=n+t$ for some gap $n\in\NR$. The gaps of $S$ are positive integers, so $s>t$. Since $s\not\in\NR$ and $s>0$, $s\in S$. Then, with $s\in S$ and $s-t\in \NR$, we have $s\in\Ap(S;t)$. Finally, since $s>t$, $s\not\in\Ap_t$. Thus $s\in\Ap(S;t)\setminus\Ap_t$, so $(\NR+t)\setminus\NR\subseteq\Ap(S;t)\setminus\Ap_t$.

For the reverse inclusion, we suppose $s\in\Ap(S;t)\setminus\Ap_t$. Since $s\in\Ap(S;t)$, we have $s\in S$ and $s-t\not\in S$. Since $s\in\Ap(S;t)$ and $s\not\in\Ap_t$, $s>t$, so $s-t>0$. Thus $s-t\in\NR$, so $s\in(\NR+t)$. Finally, since $s\in S$, $s\not\in\NR$. Therefore $s\in(\NR+t)\setminus\NR$, so $\Ap(S;t)\setminus\Ap_t\subseteq(\NR+t)\setminus\NR$. Combined with the previous paragraph, we have $(\NR+t)\setminus\NR = \Ap(S;t)\setminus\Ap_t$.

For Equation \eqref{eqn:set-equality-2}, to show inclusion we suppose $n\in\NR\setminus(\NR+t)$. Let $m=n-t$. Since $n\in\NR$ and $t\in S$, we must have $m\not\in S$. Since $n\not\in(\NR+t)$, $m\not\in\NR$, so $m<0$. Thus $1\le n<t$. Furthermore, $n\in\NR$ so $n\not\in\Ap_t$. Thus $n\in I _{0,t}\setminus\Ap_t$, so $\NR\setminus(\NR+t)\subseteq I _{0,t}\setminus\Ap_t$.

For the reverse inclusion, we suppose $n\in I_{0,t}\setminus\Ap_t$. Since $0\le n<t$, if $n\in S$ then $n$ is the minimal element of $S$ in its congruence class modulo $t$, so $n\in\Ap(S;t)$ and therefore $n\in\Ap_t$. However, since $n\not\in\Ap_t$, this is impossible so we must have $n\not\in S$, so $n\in\NR$. And since $n-t<0$, we have $n\not\in(\NR+t)$. Therefore $n\in\NR\setminus(\NR+t)$, so $I_{0,t}\setminus\Ap_t\subseteq\NR\setminus(\NR+t)$. Combined with the previous paragraph, we have $\NR\setminus(\NR+t)=I_{0,t}\setminus\Ap_t$.
\end{proof}
We now have our main result.
\begin{thm}\label{thm:tuenter-apery}
Let $S$ be a numerical semigroup with complement $\NR=\mathbb{N}_0\setminus S$.  For any nonzero $t\in S$ and any function $f$ defined on $\mathbb{N}_0$, 
\begin{equation}\label{eqn:tuenter-apery}
\sum_{n\in \NR} [f(n+t)-f(n)] = \sum_{n\in \Ap(S;t)}f(n)-\sum_{n=0}^{t-1}f(n).
\end{equation}
\end{thm}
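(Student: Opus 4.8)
The plan is to reduce the identity to the two set equalities of Lemma \ref{lemma:set-equalities} by purely formal manipulation of finite sums; since $t\in S$ and $\NR$ is finite, every sum in sight is finite and may be freely rearranged. First I would split the left-hand side of \eqref{eqn:tuenter-apery} and re-index the shifted sum via the bijection $m=n+t$ from $\NR$ onto $\NR+t$:
\[
\sum_{n\in\NR}[f(n+t)-f(n)] = \sum_{n\in\NR}f(n+t) - \sum_{n\in\NR}f(n) = \sum_{m\in(\NR+t)}f(m) - \sum_{n\in\NR}f(n).
\]
The contribution of the common elements $(\NR+t)\cap\NR$ appears in both sums with opposite signs, so it cancels, leaving
\[
\sum_{m\in(\NR+t)\setminus\NR}f(m) - \sum_{n\in\NR\setminus(\NR+t)}f(n).
\]

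Next I would invoke Lemma \ref{lemma:set-equalities} to rewrite the two index sets. By \eqref{eqn:set-equality-1} the first sum runs over $\Ap(S;t)\setminus\Ap_t$, and by \eqref{eqn:set-equality-2} the second runs over $I_{0,t}\setminus\Ap_t$, so the expression becomes
\[
\sum_{n\in\Ap(S;t)\setminus\Ap_t}f(n) - \sum_{n\in I_{0,t}\setminus\Ap_t}f(n).
\]
Finally, since $\Ap_t\subseteq\Ap(S;t)$ and $\Ap_t\subseteq I_{0,t}$ (recall $\Ap_t=\Ap(S;t)\cap I_{0,t}$), I would add and subtract $\sum_{n\in\Ap_t}f(n)$ to complete each partial sum. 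This turns the first difference into $\sum_{n\in\Ap(S;t)}f(n)-\sum_{n\in\Ap_t}f(n)$ and the second into $\sum_{n\in I_{0,t}}f(n)-\sum_{n\in\Ap_t}f(n)$; the two copies of $\sum_{n\in\Ap_t}f(n)$ cancel, yielding $\sum_{n\in\Ap(S;t)}f(n)-\sum_{n\in I_{0,t}}f(n)$, which is the right-hand side of \eqref{eqn:tuenter-apery} after observing that $\sum_{n\in I_{0,t}}f(n)=\sum_{n=0}^{t-1}f(n)$.

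The genuine content of the argument is carried entirely by Lemma \ref{lemma:set-equalities}; the theorem itself is a bookkeeping exercise, and I anticipate no real obstacle. The only steps demanding care are the cancellation of the common block $(\NR+t)\cap\NR$, which must be excised consistently from both sums, and the verification that $\Ap_t$ sits inside both $\Ap(S;t)$ and $I_{0,t}$ so that completing the partial sums is legitimate. Because no infinite or convergence issues intrude, these checks are routine.
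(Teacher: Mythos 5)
Your proposal is correct and follows essentially the same route as the paper's own proof: both arguments rest entirely on Lemma \ref{lemma:set-equalities}, cancel the common block $(\NR+t)\cap\NR$ to reduce to sums over the symmetric set differences, and then complete the partial sums via the common subset $\Ap_t$ to arrive at $\sum_{n\in\Ap(S;t)}f(n)-\sum_{n\in I_{0,t}}f(n)$. The only difference is organizational—you work forward from the left-hand side, while the paper first records the two sum identities and then combines them—which does not change the substance.
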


\begin{proof}
By Lemma \ref{lemma:set-equalities}, $(\NR+t)\setminus\NR=\Ap(S;t)\setminus\Ap_t,$ so for any function $f$ defined on these sets, we have 
\begin{equation}\label{eqn:summation-eqn-1}
\sum\limits_{n\in(\NR+t)\setminus\NR}f(n)=\sum\limits_{n\in\Ap(S;t)\setminus\Ap_t}f(n).
\end{equation}
Since $\Ap_t$ and $\Ap(S;t)$ are finite sets and $\Ap_t\subseteq\Ap(S;t)$, we can rewrite Equation~\eqref{eqn:summation-eqn-1} as
\begin{equation}\label{eqn:summation-eqn-1-cleaned-up}
\sum\limits_{n\in(\NR+t)\setminus\NR}f(n)=\sum\limits_{n\in\Ap(S;t)}f(n)-\sum\limits_{n\in\Ap_t}f(n).
\end{equation}
Similarly, by Lemma \ref{lemma:set-equalities}, $\NR\setminus(\NR+t)=I_{0,t}\setminus\Ap_t,$ so for any function $f$ defined on these sets, we have
\begin{equation}\label{eqn:summation-eqn-2}
\sum\limits_{n\in\NR\setminus(\NR+t)}f(n)=\sum\limits_{n\in I_{0,t}\setminus\Ap_t}f(n).
\end{equation}
Since $\Ap_t$ and $I_{0,t}$ are finite and $\Ap_t\subseteq I_{0,t}$, we can rewrite Equation~\eqref{eqn:summation-eqn-2} as
\begin{equation}\label{eqn:summation-eqn-2-cleaned-up}
\sum\limits_{n\in\NR\setminus(\NR+t)}f(n)=\sum\limits_{n\in I_{0,t}}f(n)-\sum\limits_{n\in \Ap_t}f(n).
\end{equation}

Then, since 
\begin{equation}\label{eqn:next-eqn}
\sum\limits_{n\in(\NR+t)}f(n)-\sum\limits_{n\in\NR}f(n) = \sum\limits_{n\in(\NR+t)\setminus\NR}f(n)-\sum\limits_{n\in\NR\setminus(\NR+t)}f(n),
\end{equation}
we simplify the right side of Equation \eqref{eqn:next-eqn} using Equations \eqref{eqn:summation-eqn-1-cleaned-up} and \eqref{eqn:summation-eqn-2-cleaned-up} to conclude
\begin{equation}
\sum\limits_{n\in(\NR+t)}f(n)-\sum\limits_{n\in\NR}f(n)=\sum\limits_{n\in\Ap(S;t)}f(n)-\sum\limits_{n\in I_{0,t}}f(n).
\end{equation}
Note that any function $f$ defined on $\mathbb{N}_0$ will necessarily be defined on the sets described above. The desired result follows immediately.\end{proof}

\begin{remark}
Since $0\in \Ap(S;t)$, we can exclude 0 from both summations on the right to write Equation~\eqref{eqn:tuenter-apery} as 
\begin{equation}\label{eqn:tuenter-apery-no-0}
\sum_{n\in\NR}[f(n+t)-f(n)]=\sum_{\substack{n\in \Ap(S;t) \\ 
n\neq 0}}f(n)-\sum_{n=1}^{t-1}f(n)
\end{equation}
and thereby apply this identity to functions $f$ defined on $\mathbb{N}$.
\end{remark}

\begin{remark}
Since $\Ap(S;t)$ contains one element from each congruence class modulo $t$, we can write Equation~\eqref{eqn:tuenter-apery} as 
\begin{equation}\label{eqn:tuenter-apery-congruences}
\sum_{n\in\NR}[f(n+t)-f(n)] = \sum_{n\in \Ap(S;t)}[f(n)-f(\overline{n})],
\end{equation} 
where $\overline{n}$ denotes the reduction of $n$ modulo $t$. In other words, given $n\in\mathbb{Z}$, the number $\overline{n}$ is the least non-negative integer congruent to $n$ modulo $t$. 
As with Equation~\eqref{eqn:tuenter-apery-no-0}, we can remove $n=0$ from the summation on the right if desired.
\end{remark}

If $S=\gens{G}$ for $G=\{a,b\}$ with $\gcd(a,b)=1$, one finds that $\Ap(S;a)=\{nb : 0\le n<a\}$. (See \cite{Selmer1977}.) In that case, we obtain Tuenter's result.

\begin{cor}[{\cite[Theorem 2.1]{Tuenter06}}]\label{cor:tuenter}
For $G=\{a,b\}$ with $\gcd(a,b)=1$, let $\NR=\mathbb{N}_0\setminus\gens{G}$. Then for any function $f$ defined on $\mathbb{N}$,
\begin{equation}\label{eqn:tuenter-original}
\sum_{n\in\NR}[f(n+a)-f(n)] = \sum_{n=1}^{a-1}[f(nb)-f(n)].
\end{equation}
\end{cor}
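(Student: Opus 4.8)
The plan is to derive Corollary~\ref{cor:tuenter} as a direct specialization of Theorem~\ref{thm:tuenter-apery}, using the stated description of the Ap\'ery set $\Ap(S;a)$ in the two-generator case. Since $\gcd(a,b)=1$, the semigroup $S=\gens{a,b}$ is indeed a numerical semigroup, and $a$ is a nonzero element of $S$, so the hypotheses of Theorem~\ref{thm:tuenter-apery} are satisfied with $t=a$.

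First I would invoke the fact, recalled in the excerpt just before the corollary, that $\Ap(S;a)=\{nb : 0\le n<a\}$. The key point justifying this is that the elements $0\cdot b,\,1\cdot b,\dots,(a-1)b$ are pairwise incongruent modulo $a$ (since $\gcd(a,b)=1$), and each $nb$ is the \emph{minimal} element of $S$ in its congruence class modulo $a$; this gives exactly one representative per residue class, which is precisely the characterization of the Ap\'ery set. I would simply cite \cite{Selmer1977} for this, as the excerpt does.

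Next I would apply Theorem~\ref{thm:tuenter-apery} with $t=a$. The left-hand side of Equation~\eqref{eqn:tuenter-apery} becomes $\sum_{n\in\NR}[f(n+a)-f(n)]$, matching the left side of Equation~\eqref{eqn:tuenter-original} directly. For the right-hand side, I would substitute the description of $\Ap(S;a)$ into $\sum_{n\in\Ap(S;a)}f(n)$, rewriting it as $\sum_{n=0}^{a-1}f(nb)$, so that
\[
\sum_{n\in\NR}[f(n+a)-f(n)] = \sum_{n=0}^{a-1}f(nb)-\sum_{n=0}^{a-1}f(n).
\]
Finally, since the $n=0$ terms on the right are $f(0)$ in both sums and cancel, I would combine the two sums and drop the $n=0$ index to obtain $\sum_{n=1}^{a-1}[f(nb)-f(n)]$, which is exactly Equation~\eqref{eqn:tuenter-original}; this cancellation is also what permits $f$ to be assumed defined only on $\mathbb{N}$ rather than on all of $\mathbb{N}_0$, matching the hypothesis of the corollary.

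There is really no substantial obstacle here: the corollary is a routine specialization, and the only content beyond bookkeeping is the identification of $\Ap(S;a)$, which is quoted rather than proved. The one point requiring a little care is confirming that the $n=0$ terms cancel so that the identity holds for functions $f$ defined only on $\mathbb{N}$; this is exactly the observation already recorded in the remark following Equation~\eqref{eqn:tuenter-apery-no-0}, so I would lean on that to keep the argument clean.
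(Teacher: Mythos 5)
Your proposal is correct and matches the paper's approach exactly: the paper likewise quotes $\Ap(S;a)=\{nb : 0\le n<a\}$ from Selmer and obtains the corollary by specializing Theorem~\ref{thm:tuenter-apery} with $t=a$, using the cancellation of the $n=0$ terms (as in the remark following Equation~\eqref{eqn:tuenter-apery-no-0}) to allow $f$ to be defined only on $\mathbb{N}$. The only difference is cosmetic---you spell out why the multiples of $b$ form the Ap\'ery set, which the paper leaves to the citation.
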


Immediately after \cite[Theorem 2.1]{Tuenter06}, Tuenter mentions that Equation~\eqref{eqn:tuenter-original} completely characterizes the set $\NR$ when $G=\{a,b\}$, which is to say that all properties of $\NR$ can be derived from this equation.  The same argument and result holds here.  Equation~\eqref{eqn:tuenter-apery} completely characterizes the set $\NR$ for any set $G$.

\subsection{Applications to the genus and Hilbert series}
We conclude this section with two applications of Theorem~\ref{thm:tuenter-apery}.

\subsubsection{A formula for the genus}
We first use $f(n)=n$ in Equation~\eqref{eqn:tuenter-apery} to re-derive Selmer's genus result (Equation~\eqref{eqn:selmer-genus}). We find \[\sum_{n\in\NR}t = \sum_{n\in\Ap(S;t)}n-\sum_{m=0}^{t-1}m,\] so \[g(S)=\#\NR = -\frac{t-1}{2}+\frac{1}{t}\sum_{n\in \Ap(S;t)}n.\] 

\subsubsection{A formula for the Hilbert series}
The Hilbert series $H_S(x)$ of a numerical semigroup $S$ is defined to be the formal power series \[H_S(x) = \sum_{n\in S}x^n=\frac{1}{1-x}-\sum_{n\in\NR}x^n.\]  Using $f(n)=x^n$ with Equation \eqref{eqn:tuenter-apery}, we obtain \[(x^t-1)\sum_{n\in\NR}x^n = \sum_{n\in\Ap(S;t)}x^n-\frac{1-x^t}{1-x},\] 
so \[H_S(x)=\frac{1}{1-x^t}\sum_{n\in \Ap(S;t)}x^n.\] For more on Hilbert series, including this identity, see \cite{Moree14}. For connections to free numerical semigroups (described in the next section), see \cite[Section 5.1]{CiolanGarciaSanchezMoree2016}.

\section{Explicit descriptions} \label{sec:smooth}

Let $S$ be a free numerical semigroup, as described in \cite[Section 2.2]{Leher2007} and \cite[Chapter 8, Section 4]{RosalesGarciaSanchez09}.  The Ap\'ery sets of free numerical semigroups are well-understood, and so in this context we obtain a further specialization of Theorem \ref{thm:tuenter-apery} in Corollary \ref{cor:smooth-tuenter}.  We then apply our result to the special case of semigroups generated by compound sequences, as were studied in \cite{GassertShor16}.

\subsection{Application to free numerical semigroups}
\begin{defn}
For $k\in\mathbb{N}_0$, suppose $G=(g_0,\dots,g_k)$ is a finite sequence with $g_i\in\mathbb{N}_0$ for all $i$. For $0\le i\le k$, let $G_i=(g_0,\dots,g_i)$ and  $d_i=\gcd(G_i)$. Let $c_i=d_{i-1}/d_i$ for $1\le i\le k$.
If $c_ig_i\in \gens{G_{i-1}}$ for $1\le i\le k$, we say $G$ is a \emph{smooth sequence}. If a set can be ordered to form a smooth sequence, then we say the set is a \emph{smooth set}. We will refer to the sequence $(c_1,\dots,c_k)$ as the \emph{$c$ values of $G$}. 
\end{defn}

\begin{defn}[{\cite[page 133]{RosalesGarciaSanchez09}}]
Suppose $S$ is a numerical semigroup. If $S=\gens{G}$ for some set $G$ with $\gcd(G)=1$ and which can be ordered to form a smooth sequence, then $S$ is a \emph{free numerical semigroup}.
\end{defn}

The class of smooth sequences contains the class of compound sequence, which in turn contains sequences of length two, geometric sequences, and supersymmetric sequences. (References are given in \cite[Section 3]{GassertShor16}.)  The condition that $c_ig_i\in\gens{G_{i-1}}$ for all $i$ provides us with a method (Corollary~\ref{cor:leher-smooth-semigp-elt-repns}) to represent any integer uniquely in terms of elements of $G$, which in turn allows us to identify the elements of $\Ap(S;g_0)$.

We quote the following from \cite{Leher2007} (using slightly different notation here), in which $S$ is generated by a sequence $G$ of positive integers. 
We note that these results do not require $G$ to be minimal (a condition where no proper subset of $G$ generates the same semigroup as $G$), and in fact hold even if $G$ is a sequence of non-negative integers.

\begin{prop}[{\cite[Remark, page 11]{Leher2007}}]\label{prop:leher-smooth-semigp-elt-repns}
Let $G=(g_0,\dots,g_k)$ be a smooth sequence with $c$ values $(c_1,\dots,c_k)$, let $S=\langle G\rangle$, and let $s\in S$. Then $s$ has a unique representation \[s=\sum\limits_{i=0}^k n_ig_i\] with $0\le n_i<c_i$ for $1\leq i\leq k$.
\end{prop}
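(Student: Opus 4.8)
The plan is to prove existence and uniqueness separately, exploiting the recursive structure of the $d_i$. First I would record two elementary consequences of the definitions. Since $d_i=\gcd(d_{i-1},g_i)$ divides $d_{i-1}$, each $c_i=d_{i-1}/d_i$ is a positive integer; and assuming $g_0>0$ (so that all $d_i$ are positive divisors of $g_0$), the key identity $\gcd(g_k,d_{k-1})=d_k$ holds, because $d_k=\gcd(d_{k-1},g_k)$. Writing $g_k=d_k h$ and $d_{k-1}=d_k c_k$, this identity says exactly that $\gcd(h,c_k)=1$, which will be the arithmetic engine driving the uniqueness argument.

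For existence I would begin with an arbitrary representation $s=\sum_{i=0}^k m_i g_i$ with $m_i\ge 0$ (available since $s\in S$) and reduce the coefficients from the top index downward. Smoothness gives $c_k g_k=\sum_{j=0}^{k-1}a_j g_j$ with $a_j\ge 0$; so long as $m_k\ge c_k$, replacing one copy of $c_k g_k$ by this combination lowers $m_k$ by $c_k$ while only increasing coefficients of index $<k$. Each step strictly decreases $m_k$, so the process terminates with $0\le m_k<c_k$. Repeating for $i=k-1,\dots,1$ in decreasing order leaves each already-adjusted coefficient untouched, since the relation $c_i g_i\in\gens{G_{i-1}}$ only redistributes mass to indices $<i$. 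At the end every coefficient with $i\ge 1$ lies in $[0,c_i)$ while $m_0$ stays non-negative, giving a representation of the required shape.

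For uniqueness I would induct on $k$, the base case $k=0$ being immediate from $g_0>0$. Suppose $\sum_{i=0}^k n_i g_i=\sum_{i=0}^k n_i' g_i$ with both tuples satisfying the bounds. Reducing modulo $d_{k-1}$ annihilates every term of index $\le k-1$, since $d_{k-1}$ divides each of $g_0,\dots,g_{k-1}$, and leaves $(n_k-n_k')g_k\equiv 0\pmod{d_{k-1}}$. Substituting $g_k=d_k h$, $d_{k-1}=d_k c_k$ and cancelling $d_k$ yields $c_k\mid (n_k-n_k')h$, whence $c_k\mid (n_k-n_k')$ because $\gcd(h,c_k)=1$; the bound $|n_k-n_k'|<c_k$ then forces $n_k=n_k'$. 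Cancelling the common term $n_k g_k$ reduces the equation to one in $\gens{G_{k-1}}$, and $G_{k-1}=(g_0,\dots,g_{k-1})$ is itself a smooth sequence with $c$ values $(c_1,\dots,c_{k-1})$, so the inductive hypothesis finishes the argument.

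The main obstacle is the arithmetic core of the uniqueness step, namely the passage from the congruence $(n_k-n_k')g_k\equiv 0\pmod{d_{k-1}}$ to the divisibility $c_k\mid (n_k-n_k')$. Everything hinges on $\gcd(g_k/d_k,d_{k-1}/d_k)=1$, which is what allows $c_k$ to act as a genuine modulus on $n_k-n_k'$ and makes the interval bound $0\le n_k,n_k'<c_k$ decisive. I would also flag at the outset that $g_0>0$ (equivalently, that the $d_i$ are positive), since otherwise some $c_i$ can vanish and the bounded-coefficient condition degenerates.
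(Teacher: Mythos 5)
Your proof is correct, but there is nothing in the paper to compare it against: the paper quotes this proposition from Leher's thesis (\cite[Remark, page 11]{Leher2007}) and uses it as a black box, giving no proof of its own. Both halves of your argument are sound. For existence, repeatedly trading $c_kg_k$ for an element of $\gens{G_{k-1}}$ until $0\le m_k<c_k$, then working down through the indices, is the standard reduction; you correctly note the two facts that make it work, namely that each trade only pushes mass to indices below $i$ (so earlier adjustments are never disturbed) and that each step decreases the coefficient by $c_i\ge 1$ (so the process terminates). For uniqueness, the arithmetic core is right: reducing mod $d_{k-1}$ kills the lower terms, and writing $g_k=d_kh$, $d_{k-1}=d_kc_k$ with $\gcd(h,c_k)=1$ turns the congruence into $c_k\mid(n_k-n_k')$, which the bound $|n_k-n_k'|<c_k$ converts into $n_k=n_k'$; the induction then closes because $G_{k-1}$ is itself smooth with $c$ values $(c_1,\dots,c_{k-1})$. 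Two remarks. First, your flag about $g_0>0$ is a genuine point of care: the paper's definition of smooth sequence allows $g_i\in\mathbb{N}_0$, and if $g_0=0$ then $d_0=0$ forces $c_1=0$ and the bounded-coefficient condition becomes vacuously unsatisfiable; in the paper's setting $S$ is a numerical semigroup, so this degeneracy never arises, but it deserves the mention you give it. Second, your uniqueness argument never uses $n_0\ge 0$, so it actually proves uniqueness among representations with $n_0\in\mathbb{Z}$ — which is exactly the stronger statement the paper needs when it extends this proposition to all integers in Corollary \ref{cor:leher-smooth-semigp-elt-repns}.
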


We extend this result for all $n\in\mathbb{Z}$.

\begin{cor}\label{cor:leher-smooth-semigp-elt-repns} Suppose $G=(g_0,\dots,g_k)$ is smooth with $c$ values $(c_1,\dots,c_k)$. Let $n\in\mathbb{Z}$. Then $n$ has a unique representation \[n=\sum_{i=0}^kn_ig_i\] where $0\leq n_i<c_i$ for $1\leq i\leq k$.
\end{cor}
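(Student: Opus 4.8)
The plan is to reduce the Corollary to Proposition~\ref{prop:leher-smooth-semigp-elt-repns} by translating an arbitrary integer into $S$ along the generator $g_0$. The guiding observation is that the bounds $0\le n_i<c_i$ constrain only the coefficients with $i\ge 1$, whereas the coefficient $n_0$ of $g_0$ is free; hence adding or subtracting a multiple of $g_0$ alters only $n_0$ and preserves the bounded part of any representation.

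For existence, given $n\in\mathbb{Z}$ I would choose $N\in\mathbb{N}_0$ large enough that $s:=n+Ng_0\in S$. This is possible because $g_0\in S$ and $S$ is cofinite, so that $n+Ng_0>F(S)$ for all large $N$. Applying Proposition~\ref{prop:leher-smooth-semigp-elt-repns} to $s$ produces $s=\sum_{i=0}^k m_ig_i$ with $0\le m_i<c_i$ for $1\le i\le k$; subtracting $Ng_0$ and setting $n_0=m_0-N$ and $n_i=m_i$ for $i\ge 1$ then gives a representation of $n$ of the required shape.

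For uniqueness, suppose $n=\sum_{i=0}^k n_ig_i=\sum_{i=0}^k n_i'g_i$ with $0\le n_i,n_i'<c_i$ for $1\le i\le k$. I would fix a single $N$ large enough that $n_0+N\ge 0$, $n_0'+N\ge 0$, and $s:=n+Ng_0\in S$ all hold at once. Then $(n_0+N,n_1,\dots,n_k)$ and $(n_0'+N,n_1',\dots,n_k')$ are both genuine representations of the semigroup element $s$ satisfying the bounds of Proposition~\ref{prop:leher-smooth-semigp-elt-repns}, so the uniqueness clause of that proposition forces $n_0+N=n_0'+N$ and $n_i=n_i'$ for $1\le i\le k$; in particular $n_0=n_0'$.

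The only genuine obstacle is guaranteeing that $n+Ng_0$ eventually lands in $S$, and this is exactly where cofiniteness of $S$ (equivalently $\gcd(G)=1$, as holds in the free setting under consideration) is needed: an integer lying in a nonzero residue class modulo $\gcd(G)$ is never representable. As a consistency check, the number of admissible tuples $(n_1,\dots,n_k)$ equals $\prod_{i=1}^k c_i=d_0/d_k=g_0$ when $d_k=1$, matching the $g_0$ residue classes modulo $g_0$; this telescoping count also points to an alternative, self-contained argument establishing directly that $(n_1,\dots,n_k)\mapsto\sum_{i=1}^k n_ig_i\bmod g_0$ is a bijection, after which $n_0$ is pinned down by divisibility.
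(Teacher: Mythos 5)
Your proof is correct and takes essentially the same route as the paper's: translate $n$ by a large multiple of $g_0$ so that it lands in $S$ (beyond the Frobenius number), apply Proposition~\ref{prop:leher-smooth-semigp-elt-repns} for existence, and obtain uniqueness by shifting any two representations of $n$ back into $S$ and invoking the proposition's uniqueness there. Your only refinement is choosing $N$ large enough that both shifted leading coefficients $n_0+N$ and $n_0'+N$ are non-negative before applying the proposition, a slight tightening of the paper's comparison but not a different argument.
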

\begin{proof}
Let $n\in\mathbb{Z}$. If $n\in S$, by Proposition \ref{prop:leher-smooth-semigp-elt-repns}, we are done.

If $n\not\in S$, then there exists some $M\in\mathbb{Z}$ such that $n+Mg_0>F(S)$ and thus $n+Mg_0\in S$. By Proposition \ref{prop:leher-smooth-semigp-elt-repns}, there exist integers $n_1,\dots,n_k$ where \[n+Mg_0=\sum\limits_{i=0}^k n_ig_i,\] with $0\le n_i<c_i$ for $1\le i\le k$, so $n$ has a representation \begin{equation}\label{eqn:representation-of-n}n=(n_0-M)g_0+\sum\limits_{i=1}^k n_i g_i \end{equation} with $0\le n_i<c_i$. Thus, $n$ has at least one representation.

To show this representation is unique, suppose we have another representation $n= \sum\limits_{i=0}^k m_ig_i$ with $0\le m_i<c_i$ for $1\le i\le k$. Then \[n+Mg_0=(M+m_0)g_0+\sum\limits_{i=1}^k m_ig_i.\] Since $n+Mg_0\in S$, it has the unique representation $n+Mg_0=\sum\limits_{i=0}^k n_ig_i$. Therefore for $n+M g_0 = Mg_0+\sum\limits_{i=0}^k m_ig_i$, we have $m_0=n_0-M$ and $m_i=n_i$ for $1\le i\le k$, so this is the same representation as in Equation \eqref{eqn:representation-of-n}.
\end{proof}

\begin{lemma} \label{lemma:leher-smooth-nonnegative-repns}
Suppose $S=\langle G\rangle$ with $G$ smooth. Let $n\in\mathbb{Z}$. Given the unique representation of $n$, we have $n\in S$ if and only if $n_0\ge0$. 
\end{lemma}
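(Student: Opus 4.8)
The plan is to use the unique representation from Corollary~\ref{cor:leher-smooth-semigp-elt-repns} directly, establishing each direction of the biconditional separately. The key observation is that in the representation $n=\sum_{i=0}^k n_ig_i$, every coefficient $n_i$ for $1\le i\le k$ satisfies $0\le n_i<c_i$, so these terms are already non-negative; the only coefficient whose sign is unconstrained is $n_0$.

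The forward direction is the easier one. Suppose $n_0\ge 0$. Then every coefficient in the representation is non-negative, so $n$ is a non-negative integer combination of the $g_i$, which means $n\in\gens{G}=S$ by definition.

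The reverse direction, which I expect to be the main obstacle, requires showing that if $n\in S$ then $n_0\ge 0$. The subtlety is that membership $n\in S$ guarantees \emph{some} non-negative representation, but I must rule out the possibility that $n$'s \emph{unique} constrained representation (the one from the corollary) has $n_0<0$ while a different, unconstrained non-negative representation exists. The plan is to invoke Proposition~\ref{prop:leher-smooth-semigp-elt-repns}: since $n\in S$, it has a representation of the constrained form $\sum n_ig_i$ with $0\le n_i<c_i$ for $1\le i\le k$ and $n_0\ge 0$. By the uniqueness in Corollary~\ref{cor:leher-smooth-semigp-elt-repns}, this representation \emph{is} the unique representation of $n$, and hence its $n_0$ coefficient is non-negative. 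The only care needed is to confirm that the representation furnished by Proposition~\ref{prop:leher-smooth-semigp-elt-repns} genuinely matches the normal form of Corollary~\ref{cor:leher-smooth-semigp-elt-repns}, so that uniqueness applies and forces the coefficients to coincide; this is immediate since both impose exactly the constraints $0\le n_i<c_i$ for $1\le i\le k$ on the same integer $n$.

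Combining the two directions yields the biconditional. The entire argument is short once one recognizes that the constraint $0\le n_i<c_i$ for the higher-index coefficients reduces the sign question for membership in $S$ entirely to the sign of the single free coefficient $n_0$.
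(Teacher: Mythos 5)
Your proof is correct and takes essentially the same approach as the paper's: the forward direction is immediate because all coefficients are non-negative, and the reverse direction combines the representation of an element of $S$ guaranteed by Proposition~\ref{prop:leher-smooth-semigp-elt-repns} with the uniqueness statement of Corollary~\ref{cor:leher-smooth-semigp-elt-repns}. The paper compresses exactly this reasoning into one line (``if $n_0<0$, then $n\notin S$ by the uniqueness of representation''), which is just the contrapositive of your argument.
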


\begin{proof}
Clearly if $n_0\ge0$, then $n\in S$. Otherwise if $n_0<0$, then $n \notin S$ by the uniqueness of representation.
\end{proof}

The following proposition is a reformulation of \cite[Theorem 7]{Leher2007}.
\begin{prop}
\label{prop:leher-smooth-apery-repns}
Suppose $S=\langle G\rangle$ for $G$ smooth. Let $n\in\mathbb{Z}$. Given the unique representation of $n$, we have $n\in \Ap(S;g_0)$ if and only if $n_0=0$.
\end{prop}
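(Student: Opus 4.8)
The plan is to argue directly from the definition of the Ap\'ery set together with the uniqueness of representations established in Corollary~\ref{cor:leher-smooth-semigp-elt-repns} and the membership criterion of Lemma~\ref{lemma:leher-smooth-nonnegative-repns}. By definition, $n\in\Ap(S;g_0)$ precisely when $n\in S$ and $n-g_0\notin S$. So I would reduce the whole statement to understanding how the unique representations of $n$ and of $n-g_0$ are related, and then translate each membership condition into a condition on the relevant $g_0$-coefficient via Lemma~\ref{lemma:leher-smooth-nonnegative-repns}.

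The key observation is that if $n$ has unique representation $n=\sum_{i=0}^k n_i g_i$ with $0\le n_i<c_i$ for $1\le i\le k$, then $n-g_0=(n_0-1)g_0+\sum_{i=1}^k n_i g_i$. Since the constraints $0\le n_i<c_i$ are imposed only for $1\le i\le k$ and leave the $g_0$-coefficient entirely unconstrained, the coefficients $n_1,\dots,n_k$ are still admissible; hence by the uniqueness in Corollary~\ref{cor:leher-smooth-semigp-elt-repns}, this is exactly the unique representation of $n-g_0$, and its $g_0$-coefficient is $n_0-1$.

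Now I would apply Lemma~\ref{lemma:leher-smooth-nonnegative-repns} twice. It gives $n\in S$ if and only if $n_0\ge 0$, and $n-g_0\in S$ if and only if $n_0-1\ge 0$, i.e. $n_0\ge 1$; equivalently, $n-g_0\notin S$ if and only if $n_0\le 0$. Combining these, $n\in\Ap(S;g_0)$ holds exactly when $n_0\ge 0$ and $n_0\le 0$, that is, when $n_0=0$, which is the claim.

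I do not expect a genuine obstacle here, as the argument is a short deduction from results already proved. The one point that deserves explicit care, rather than difficulty, is the justification that decrementing $n_0$ by $1$ yields the unique representation of $n-g_0$; this must be stated clearly, since it is the step where the asymmetric role of $g_0$ (whose coefficient is free) versus $g_1,\dots,g_k$ (whose coefficients are bounded) is used.
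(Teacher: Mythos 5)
Your proof is correct and follows essentially the same route as the paper's: both reduce membership in $\Ap(S;g_0)$ to the two conditions $n\in S$ and $n-g_0\notin S$, and apply Lemma~\ref{lemma:leher-smooth-nonnegative-repns} to the $g_0$-coefficients $n_0$ and $n_0-1$ (the paper organizes this as a three-case analysis on the sign of $n_0$, while you intersect the two inequalities directly). Your explicit justification that decrementing $n_0$ yields the unique representation of $n-g_0$ is a point the paper leaves implicit, and stating it is a mild improvement rather than a divergence.
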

\begin{proof}
We use Lemma \ref{lemma:leher-smooth-nonnegative-repns} for the cases $n_0<0$, $n_0=0$, and $n_0>0$.

If $n_0<0$ then $n\not\in S$, so $n\not\in\Ap(S;g_0)$. If $n_0=0$, then $n\in S$ and $n-g_0\not\in S$, so $n\in\Ap(S;g_0)$. If $n_0>0$, then $n\in S$ and also $n-g_0\in S$, so $n\not\in\Ap(S;g_0)$.

Thus $n\in\Ap(S;g_0)$ if and only if $n_0=0$.
\end{proof}
That is, if $G$ is smooth then we can explicitly describe the Ap\'ery set of $g_0$ in $S$: \[\Ap(S;g_0)=\left\{\sum_{i=1}^kn_ig_i : 0\le n_i<c_i \right\}.\]  This leads to an explicit version of Theorem~\ref{thm:tuenter-apery} for smooth sequences.

\begin{cor}\label{cor:smooth-tuenter}
Suppose $S$ is a free numerical semigroup, so $S=\gens{G}$ for $G$ a smooth sequence. Then for any $f$ defined on $\mathbb{N}_0$, 
\begin{equation}\label{eqn:smooth-tuenter}
\sum_{n\in\NR}[f(n+g_0)-f(n)] = \sum\limits_{n_1=0}^{c_1-1}\dots\sum\limits_{n_k=0}^{c_k-1} f\left(\sum_{i=1}^k n_ig_i \right) - \sum_{n=0}^{g_0-1}f(n).
\end{equation}
\end{cor}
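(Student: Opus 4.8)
The plan is to invoke Theorem~\ref{thm:tuenter-apery} directly with $t=g_0$ and then rewrite the Ap\'ery-set summation using the explicit description of $\Ap(S;g_0)$ established just above the statement. Since $g_0$ is a nonzero element of $S$ (it serves as the modulus for the Ap\'ery set and as $d_0$ in the definition of the $c$ values), Theorem~\ref{thm:tuenter-apery} applies and yields
\[
\sum_{n\in\NR}[f(n+g_0)-f(n)] = \sum_{n\in\Ap(S;g_0)}f(n) - \sum_{n=0}^{g_0-1}f(n).
\]
The subtracted sum on the right already matches the corresponding term in Equation~\eqref{eqn:smooth-tuenter}, so the entire task reduces to converting $\sum_{n\in\Ap(S;g_0)}f(n)$ into the iterated sum over the box $0\le n_i<c_i$ for $1\le i\le k$.

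To do this, I would show that the map $\Phi\colon(n_1,\dots,n_k)\mapsto\sum_{i=1}^k n_ig_i$, defined on $\prod_{i=1}^k\{0,1,\dots,c_i-1\}$, is a bijection onto $\Ap(S;g_0)$. For surjectivity, take any $s\in\Ap(S;g_0)$; by Proposition~\ref{prop:leher-smooth-apery-repns} its unique representation from Corollary~\ref{cor:leher-smooth-semigp-elt-repns} has $n_0=0$, so $s=\sum_{i=1}^k n_ig_i=\Phi(n_1,\dots,n_k)$ with each $0\le n_i<c_i$, placing $s$ in the image. For injectivity, suppose $\Phi(n_1,\dots,n_k)=\Phi(m_1,\dots,m_k)=:s$; then writing $s=0\cdot g_0+\sum_{i=1}^k n_ig_i$ and $s=0\cdot g_0+\sum_{i=1}^k m_ig_i$ exhibits two representations of $s$ of the form required by Corollary~\ref{cor:leher-smooth-semigp-elt-repns}, and uniqueness forces $n_i=m_i$ for all $i$. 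Reindexing the sum along $\Phi$ then gives
\[
\sum_{n\in\Ap(S;g_0)}f(n)=\sum_{n_1=0}^{c_1-1}\cdots\sum_{n_k=0}^{c_k-1}f\!\left(\sum_{i=1}^k n_ig_i\right),
\]
and substituting this into the displayed consequence of Theorem~\ref{thm:tuenter-apery} produces Equation~\eqref{eqn:smooth-tuenter}.

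The argument is essentially bookkeeping, and I do not anticipate a serious obstacle. The only point that genuinely requires care is the bijectivity of $\Phi$, and there the substance lies entirely in the uniqueness clause of Corollary~\ref{cor:leher-smooth-semigp-elt-repns}: it is what guarantees that distinct index tuples yield distinct Ap\'ery elements, so that the iterated sum counts each element of $\Ap(S;g_0)$ exactly once with no omissions or repetitions. Everything else is a direct substitution.
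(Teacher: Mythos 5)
Your proposal is correct and follows essentially the same route as the paper: the paper derives Corollary~\ref{cor:smooth-tuenter} immediately from Theorem~\ref{thm:tuenter-apery} with $t=g_0$ together with the explicit description $\Ap(S;g_0)=\{\sum_{i=1}^k n_ig_i : 0\le n_i<c_i\}$ obtained from Proposition~\ref{prop:leher-smooth-apery-repns}. The only difference is that you spell out the bijectivity of the indexing map $\Phi$ (which the paper leaves implicit), and that detail is handled correctly via the uniqueness clause of Corollary~\ref{cor:leher-smooth-semigp-elt-repns}.
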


\subsection{Application to numerical semigroups generated by compound sequences}

For compound sequences, we follow the notation of \cite{GassertShor16}. See also \cite{KiersONeillPonomarenko16}. Compound sequences are special cases of smooth sequences (by Corollary~\ref{cor:compound-seq-are-smooth}), so we apply the above results to compound sequences and obtain an alternate proof of \cite[Theorem 3.3]{GassertShor16}.

\begin{defn}
Let $k\in\mathbb{N}_0$ and $A=(a_1,\dots,a_k),B=(b_1,\dots,b_k)\in\mathbb{N}^k$. We say $(A,B)$ is a \textit{suitable pair} of $k$-tuples if $\gcd(a_i,b_j)=1$ for all $i\ge j$. If $(A,B)$ is a suitable pair, then we let $g_i=b_1\cdots b_i a_{i+1}\cdots a_k$ for $0\le i\le k$.  The sequence $G(A,B)=(g_0,\dots,g_k)$ is a \emph{compound sequence}.  We say a set $G = \{g_0, \ldots, g_k\} \subseteq \mathbb{N}^{k+1}$ is a \emph{compound set} if its elements can be ordered to form a compound sequence.
\end{defn}

\begin{prop}[{\cite[Proposition 2, Property 5]{KiersONeillPonomarenko16}}]
If $(A,B)$ is a suitable pair, then \linebreak $\gcd(G(A,B))=1$, so $\gens{G(A,B)}$ is a numerical semigroup.
\end{prop}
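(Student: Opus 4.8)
The plan is to establish the arithmetic claim $\gcd(G(A,B))=1$ directly, from which the semigroup conclusion is automatic: the introduction records that $\gens{G}$ is a numerical semigroup if and only if $\gcd(G)=1$. I would argue by contradiction, supposing that some prime $p$ divides $\gcd(g_0,\dots,g_k)$, so that $p\mid g_i$ for every $i$, and deriving a violation of the suitability hypothesis $\gcd(a_i,b_{j'})=1$ for all $i\ge j'$.

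First I would squeeze information out of $g_0$. Since $g_0=a_1\cdots a_k$ and $p\mid g_0$, the prime $p$ divides at least one factor $a_i$; I would then select $j$ to be the \emph{largest} index with $p\mid a_j$. By maximality, $p$ divides none of $a_{j+1},\dots,a_k$, so $p\nmid a_{j+1}\cdots a_k$. Next I would examine the single term $g_j=(b_1\cdots b_j)(a_{j+1}\cdots a_k)$: because $p\mid g_j$ but $p$ does not divide the second factor, $p$ must divide $b_1\cdots b_j$, and hence $p\mid b_\ell$ for some $\ell$ with $1\le\ell\le j$.

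At this point I would have $p\mid a_j$ and $p\mid b_\ell$ with $j\ge\ell$, so $p\mid\gcd(a_j,b_\ell)$. Applying the defining condition of a suitable pair with $i=j$ and $j'=\ell$ (legitimate precisely because $j\ge\ell$) gives $\gcd(a_j,b_\ell)=1$, the desired contradiction. This forces $\gcd(g_0,\dots,g_k)=1$, and the edge case $k=0$ is immediate since then $g_0$ is the empty product $1$.

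There is no serious obstacle here; the proof is short. The one genuine decision — and the only place the hypothesis is used in exactly the form stated — is choosing $j$ to be the largest index with $p\mid a_j$. That choice guarantees the leftover $a$-factors appearing in $g_j$ are coprime to $p$, which pushes all of $p$'s divisibility onto the $b$-factors with indices at most $j$, producing the inequality $j\ge\ell$ that aligns perfectly with the quantifier $i\ge j'$ in the definition. A smallest-index choice would fail, since the higher-indexed $a$-factors surviving in $g_j$ could themselves be divisible by $p$ and no contradiction would emerge.
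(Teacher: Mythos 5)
Your proof is correct, and there is nothing in the paper to compare it against: the paper does not prove this proposition itself but imports it by citation from Kiers--O'Neill--Ponomarenko (Proposition 2, Property 5), so your argument supplies a complete, self-contained proof of a result the paper treats as a black box. The logic is sound at every step: if a prime $p$ divided every $g_i$, then from $p \mid g_0 = a_1\cdots a_k$ you may take $j$ maximal with $p \mid a_j$, whence $p \nmid a_{j+1}\cdots a_k$; then $p \mid g_j = (b_1\cdots b_j)(a_{j+1}\cdots a_k)$ forces $p \mid b_\ell$ for some $\ell \le j$, and $\gcd(a_j,b_\ell) = 1$ with $j \ge \ell$ gives the contradiction. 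Your observation that the \emph{maximal} choice of $j$ is what makes the quantifier $i \ge j'$ in the suitability condition line up is exactly right, and the $k=0$ edge case is handled. For what it's worth, there is a mirror-image argument starting from $p \mid g_k = b_1\cdots b_k$, taking $\ell$ minimal with $p \mid b_\ell$, and then extracting $p \mid a_j$ with $j \ge \ell$ from $g_{\ell-1}$; the two are equivalent in substance, so yours is as clean a route as one could ask for.
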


\begin{prop}\label{prop:permute-compound}
Let $G=(g_0,\dots,g_k)=G(A,B)$ be a compound sequence for a suitable pair $(A,B)$ of $k$-tuples with $A=(a_1,\dots,a_k)$, $B=(b_1,\dots,b_k)$. For any $j$ with $0\le j\le k$, let $\rho_j$ be the permutation defined by
\[\rho_j(g_i)=
\begin{dcases*}
g_{j-i} & for $i\leq j$, \\ 
g_i & for $i>j$,
\end{dcases*}\]
so that $\rho_j(G)=(g_j,\dots,g_0,g_{j+1},\dots,g_k)$.
Then $\rho_j(G)$ 
is a smooth (but not necessarily compound) sequence with leading term $g_j$.  The corresponding $c$ values of $\rho_j(G)$ are $
(b_j,\dots,b_1,a_{j+1},\dots,a_k)$.
\end{prop}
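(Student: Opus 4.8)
The plan is to verify the definition of smoothness directly for the permuted sequence $H=\rho_j(G)=(h_0,\dots,h_k)$, where $h_i=g_{j-i}$ for $0\le i\le j$ and $h_i=g_i$ for $j<i\le k$. Writing $d_i^H=\gcd(h_0,\dots,h_i)$ and $c_i^H=d_{i-1}^H/d_i^H$, I must (i) compute the $c_i^H$ and check they equal $(b_j,\dots,b_1,a_{j+1},\dots,a_k)$, and (ii) exhibit, for each $i$, a witness showing $c_i^H h_i\in\gens{h_0,\dots,h_{i-1}}$.

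The first ingredient is a telescoping relation among the $g_i$. From $g_i=b_1\cdots b_i\,a_{i+1}\cdots a_k$ one reads off $g_l/g_{l-1}=b_l/a_l$, hence
\[a_l g_l=b_l g_{l-1},\qquad 1\le l\le k.\]
These identities will serve as the smoothness witnesses. The second, and crux, ingredient is a gcd formula for consecutive terms: for $0\le p\le q\le k$,
\[\gcd(g_p,g_{p+1},\dots,g_q)=(b_1\cdots b_p)(a_{q+1}\cdots a_k).\]
To prove this I would factor $(b_1\cdots b_p)(a_{q+1}\cdots a_k)$ out of each $g_m$ with $p\le m\le q$; the remaining cofactors $(b_{p+1}\cdots b_m)(a_{m+1}\cdots a_q)$ are precisely the terms of the compound sequence $G(A^*,B^*)$ associated to the truncated tuples $A^*=(a_{p+1},\dots,a_q)$ and $B^*=(b_{p+1},\dots,b_q)$, which again form a suitable pair. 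The cited fact that $\gcd(G(A^*,B^*))=1$ then finishes the computation.

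I expect this gcd step to be the main obstacle. The tempting shortcut of taking the gcd of only the endpoints $g_p$ and $g_q$ fails, because $\gcd(a_{p+1}\cdots a_q,\,b_{p+1}\cdots b_q)$ need not be $1$ from the pairwise hypotheses alone: suitability only controls pairs $(a_s,b_t)$ with $s\ge t$. Recognizing the full intermediate block as another compound sequence is exactly what lets the suitability hypothesis do its work, and this reduction to the cited $\gcd=1$ result is what I would present carefully.

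With the gcd formula in hand, the rest is bookkeeping. Since $\{h_0,\dots,h_i\}$ equals $\{g_{j-i},\dots,g_j\}$ when $i\le j$ and $\{g_0,\dots,g_i\}$ when $i>j$, the formula gives $d_i^H=(b_1\cdots b_{j-i})(a_{j+1}\cdots a_k)$ for $i\le j$ and $d_i^H=a_{i+1}\cdots a_k$ for $i>j$. Taking successive ratios collapses telescopically to $c_i^H=b_{j-i+1}$ for $1\le i\le j$ and $c_i^H=a_i$ for $j<i\le k$, which is precisely the claimed list $(b_j,\dots,b_1,a_{j+1},\dots,a_k)$. Finally, for smoothness I would use the relations above: for $1\le i\le j$, setting $l=j-i+1$ gives $c_i^H h_i=b_{j-i+1}g_{j-i}=a_{j-i+1}g_{j-i+1}$, a multiple of $g_{j-i+1}=h_{i-1}$; and for $j<i\le k$, $c_i^H h_i=a_i g_i=b_i g_{i-1}$, a multiple of $g_{i-1}$, which appears among $h_0,\dots,h_{i-1}$. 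In every case $c_i^H h_i\in\gens{h_0,\dots,h_{i-1}}$, so $H$ is smooth with leading term $h_0=g_j$, completing the proof.
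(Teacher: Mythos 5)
Your proof is correct and follows essentially the same route as the paper's: compute the gcds $\widehat{d}_i$ of the initial segments of $\rho_j(G)$, take successive ratios to obtain the $c$ values $(b_j,\dots,b_1,a_{j+1},\dots,a_k)$, and verify the smoothness condition using the cross identities $a_lg_l=b_lg_{l-1}$. The one place you go beyond the paper is the block-gcd formula $\gcd(g_p,\dots,g_q)=(b_1\cdots b_p)(a_{q+1}\cdots a_k)$, which the paper simply asserts (``We find that\dots'') but which you justify by recognizing the cofactors as the compound sequence of the truncated pair $(a_{p+1},\dots,a_q)$, $(b_{p+1},\dots,b_q)$ --- still suitable, since suitability is inherited by consecutive sub-tuples --- and invoking the cited $\gcd=1$ result; this is a worthwhile addition, and your observation that the endpoint-only shortcut fails (suitability controls $\gcd(a_s,b_t)$ only for $s\ge t$) correctly identifies why the full block is needed.
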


\begin{proof}
For notation, let $\widehat{G}=\rho_j(G).$ Then $\widehat{G}=(\widehat{g}_0,\dots,\widehat{g}_k)=(g_j,\dots,g_0,g_{j+1},\dots,g_k).$ In what follows, we will compute $\widehat{d}_i=\gcd(\widehat{g}_0,\dots,\widehat{g}_i)$ and $\widehat{c}_i=\widehat{d}_{i-1}/\widehat{d}_i$.

We find that
\[\widehat{d}_i=\gcd(\widehat{g}_0,\dots,\widehat{g}_i)=
\begin{dcases*}
b_1\cdots b_{j-i}a_{j+1}\cdots a_k & if $0\le i\le j$, \\
a_{i+1}\cdots a_k & if $j<i\le k$,
\end{dcases*}\]
and
\[\widehat{c}_i=\widehat{d}_{i-1}/\widehat{d}_i=
\begin{dcases*}
b_{j-i+1} & if $i\le j$, \\ 
a_i & if $i>j$. 
\end{dcases*} 
\]
In other words, the $c$ values for $\widehat{G}$ are $(\widehat{c}_1,\dots,\widehat{c}_k)=(b_j,\dots,b_1,a_{j+1},\dots,a_k)$.

We now verify the smoothness condition.  Let $\widehat{S}_i=\gens{\widehat{g}_0,\dots,\widehat{g}_i}$ and note that $\widehat{S}_i=S_i$ for $i\ge j$.  To show $\widehat{G}$ is smooth, we need to show $\widehat{c}_i\widehat{g}_i\in \widehat{S}_{i-1}$ for $1\leq i\leq k$.  If $i\leq j$, then $\widehat{c}_i\widehat{g}_i=b_{j-i+1}g_{j-i}=a_{j-i+1}g_{j-i+1}\in\gens{g_j,\dots,g_{j-i+1}}=\gens{\widehat{g}_0,\dots,\widehat{g}_{i-1}}=\widehat{S}_{i-1}$.   If $i>j$, then $\widehat{c}_i\widehat{g}_i=a_ig_i=b_i g_{i-1}\in \gens{g_0,\dots, g_{i-1}}=S_{i-1}=\widehat{S}_{i-1}$.  Thus, $\rho_j(G)=\widehat{G}$ is smooth.
\end{proof}

When $j=0$, $\rho_j(G)=G$, so we see that compound sequences are smooth.
\begin{cor}\label{cor:compound-seq-are-smooth}
The compound sequence $G=G(A,B)$ is smooth with $c$ values $(a_1,\dots,a_k)$.
\end{cor}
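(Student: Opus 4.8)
The plan is to deduce this immediately from Proposition \ref{prop:permute-compound} by taking the special value $j=0$, since in that case the permutation $\rho_j$ collapses to the identity and $\rho_0(G)=G$. This is explicitly flagged in the sentence preceding the corollary, so the proof amounts to verifying that the conclusions of Proposition \ref{prop:permute-compound} specialize correctly when $j=0$.

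First I would check that $\rho_0$ is the identity permutation. From the definition, $\rho_0(g_i)=g_{0-i}$ for $i\leq 0$ (which forces $i=0$, giving $\rho_0(g_0)=g_0$) and $\rho_0(g_i)=g_i$ for $i>0$. Hence $\rho_0$ fixes every term, so $\rho_0(G)=G$. Next I would specialize the list of $c$ values. Proposition \ref{prop:permute-compound} asserts that the $c$ values of $\rho_j(G)$ are $(b_j,\dots,b_1,a_{j+1},\dots,a_k)$. Setting $j=0$, the block $(b_j,\dots,b_1)$ is empty (there are no $b$-terms when $j=0$), and the block $(a_{j+1},\dots,a_k)$ becomes $(a_1,\dots,a_k)$. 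Thus the $c$ values of $G=\rho_0(G)$ are exactly $(a_1,\dots,a_k)$, as claimed.

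Finally, since Proposition \ref{prop:permute-compound} guarantees that $\rho_j(G)$ is a smooth sequence for every $j$ with $0\leq j\leq k$, the case $j=0$ yields that $G$ itself is smooth. Combining this with the computation of the $c$ values completes the argument. I do not expect any genuine obstacle here: all the work has been done in Proposition \ref{prop:permute-compound}, and the only care required is bookkeeping to confirm that the $b$-block of the $c$ values vanishes when $j=0$ and that the leading term $g_j=g_0$ coincides with the original leading term of $G$.
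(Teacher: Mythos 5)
Your proposal is correct and matches the paper's own argument exactly: the paper derives this corollary by setting $j=0$ in Proposition \ref{prop:permute-compound}, noting that $\rho_0(G)=G$ and that the list of $c$ values $(b_j,\dots,b_1,a_{j+1},\dots,a_k)$ collapses to $(a_1,\dots,a_k)$. Your additional bookkeeping (verifying $\rho_0$ is the identity and that the $b$-block is empty) is just a more explicit write-up of the same one-line specialization.
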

\begin{remark}
If $G$ is smooth, $\rho_j(G)$ is not necessarily smooth.
For instance, let $G=(6,10,11)$ and $j=2$.  Then $G$ is smooth (with $c_1=3, c_2=2$), but $\rho_2(G)=(11,10,6)$ (with $\widehat{c}_1=11, \widehat{c}_2=1$) is not smooth.

Similarly, if $G$ is compound, then $\rho_j(G)$ is not necessarily compound.  
As an example, let $A=(2,2)$ and $B=(3,3)$.  Then $G(A,B)=(4,6,9)$ is compound.  Let $j=1$.  Then $\rho_1(G(A,B))=(6,4,9)$, which is not compound.  However, note that $\rho_1(G(A,B))$ is smooth (which it must be by Proposition~\ref{prop:permute-compound}).
\end{remark}

Since $\rho_j(G)$ is smooth by Proposition~\ref{prop:permute-compound}, we can apply Corollary ~\ref{cor:leher-smooth-semigp-elt-repns}, Lemma \ref{lemma:leher-smooth-nonnegative-repns}, and Proposition \ref{prop:leher-smooth-apery-repns} to $\rho_j(G)$ to obtain the following.

\begin{cor}
Let $G=(g_0,\dots,g_k)$ be a compound sequence. Then $G=G(A,B)$ for a suitable pair of $k$-tuples $(A,B)$.  Let $S=\langle G\rangle$.

For any $j$ with $0\le j\le k$ and any $n\in\mathbb{Z}$, there is a unique representation $n=\sum_{i=0}^k n_ig_i$ where $n_0,\dots,n_k\in\mathbb{Z}$ with $0\le n_i<b_{i+1}$ if $0\le i<j$ and $0\le n_i<a_i$ if $j<i\le k$.  Given such a representation, $n\in S$ if and only if $n_j\ge0$. Furthermore, $n\in \Ap(S;g_j)$ if and only if $n_j=0$.
\end{cor}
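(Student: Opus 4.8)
The plan is to obtain all three assertions by transporting the corresponding statements for smooth sequences through the permutation $\rho_j$ of Proposition~\ref{prop:permute-compound}. Since $G$ is given as a compound sequence, by definition $G=G(A,B)$ for some suitable pair $(A,B)$, which fixes the values $a_i$ and $b_i$. By Proposition~\ref{prop:permute-compound}, the reordered sequence $\widehat{G}:=\rho_j(G)=(\widehat{g}_0,\dots,\widehat{g}_k)=(g_j,\dots,g_0,g_{j+1},\dots,g_k)$ is smooth, with leading term $\widehat{g}_0=g_j$ and $c$ values $(\widehat{c}_1,\dots,\widehat{c}_k)=(b_j,\dots,b_1,a_{j+1},\dots,a_k)$. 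This is precisely the input needed to apply Corollary~\ref{cor:leher-smooth-semigp-elt-repns}, Lemma~\ref{lemma:leher-smooth-nonnegative-repns}, and Proposition~\ref{prop:leher-smooth-apery-repns} to $\widehat{G}$.

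First I would record the reindexing dictionary between the coefficients of the two orderings. Writing the unique representation from Corollary~\ref{cor:leher-smooth-semigp-elt-repns} as $n=\sum_{i=0}^k \widehat{n}_i\widehat{g}_i$ with $0\le\widehat{n}_i<\widehat{c}_i$ for $1\le i\le k$, I set $n_{j-i}=\widehat{n}_i$ for $0\le i\le j$ and $n_i=\widehat{n}_i$ for $j<i\le k$. Since $\widehat{g}_i=g_{j-i}$ for $i\le j$ and $\widehat{g}_i=g_i$ for $i>j$, this merely relabels each summand $\widehat{n}_i\widehat{g}_i$ as a coefficient on the original generator $g_{j-i}$ (respectively $g_i$); it is a bijection between representations and preserves the value of the sum, so $n=\sum_{i=0}^k n_ig_i$.

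Next I would verify that the coefficient bounds match the stated ones. For $1\le i\le j$ the constraint $0\le\widehat{n}_i<\widehat{c}_i=b_{j-i+1}$ becomes, under the substitution $i'=j-i$ (so $0\le i'<j$), the constraint $0\le n_{i'}<b_{i'+1}$; for $j<i\le k$ the constraint $0\le\widehat{n}_i<\widehat{c}_i=a_i$ is already in the desired form $0\le n_i<a_i$. The only unconstrained coefficient is $\widehat{n}_0$, the coefficient of $\widehat{g}_0=g_j$, which under the dictionary is $n_j$; hence $n_j$ ranges over all of $\mathbb{Z}$, matching the statement that no bound is placed on $n_j$. Uniqueness for $G$ then follows from uniqueness for $\widehat{G}$ together with the bijectivity of the dictionary.

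Finally, the membership criteria transfer verbatim: Lemma~\ref{lemma:leher-smooth-nonnegative-repns} applied to $\widehat{G}$ gives $n\in S$ if and only if $\widehat{n}_0\ge0$, i.e.\ $n_j\ge0$, and Proposition~\ref{prop:leher-smooth-apery-repns} gives $n\in\Ap(S;\widehat{g}_0)=\Ap(S;g_j)$ if and only if $\widehat{n}_0=0$, i.e.\ $n_j=0$. I expect the only genuine subtlety to be the index bookkeeping in the middle steps—in particular confirming that $\widehat{c}_i=b_{j-i+1}$ reindexes to $b_{i'+1}$ and that the distinguished free coefficient $\widehat{n}_0$ lands on $g_j$—since the smoothness input and the membership statements are immediate once the correspondence is in place.
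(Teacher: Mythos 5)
Your proposal is correct and is exactly the paper's argument: the paper derives this corollary in one sentence by applying Corollary~\ref{cor:leher-smooth-semigp-elt-repns}, Lemma~\ref{lemma:leher-smooth-nonnegative-repns}, and Proposition~\ref{prop:leher-smooth-apery-repns} to the smooth sequence $\rho_j(G)$ from Proposition~\ref{prop:permute-compound}. Your write-up simply makes explicit the reindexing dictionary (including the check that $\widehat{c}_i=b_{j-i+1}$ becomes $b_{i'+1}$ and that the free coefficient $\widehat{n}_0$ corresponds to $n_j$), which the paper leaves implicit.
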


Combining Corollary~\ref{cor:smooth-tuenter} and Proposition~\ref{prop:permute-compound}, we get the explicit identity for compound sequences which was shown in \cite{GassertShor16}.

\begin{cor}[{\cite[Theorem 3.3]{GassertShor16}}]\label{cor:compound-tuenter}
For a suitable pair $(A,B)$ of $k$-tuples, let $G=G(A,B)=(g_0,\dots,g_k)$ be the corresponding compound sequence for $A=(a_1,\dots,a_k)$ and $B=(b_1,\dots,b_k)$.  Then for any $j$ with $0\le i\le k$ and any $f$ defined on $\mathbb{N}_0$,
\[\sum_{n\in \NR}(f(n+g_j)-f(n)) = \sum_{n_0=0}^{b_1-1}\dots \sum_{n_{j-1}=0}^{b_j-1} \sum_{n_{j+1}=0}^{a_{j+1}-1} \dots \sum_{n_k=0}^{a_k-1} f\left(\sum_{\substack{i=0\\ i\neq j}}^k n_ig_i \right) - \sum_{n=0}^{g_j-1}f(n).\]
\end{cor}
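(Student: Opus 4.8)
The plan is to apply Corollary~\ref{cor:smooth-tuenter} directly to the permuted sequence $\rho_j(G)$ and then reindex the resulting multiple sum into the claimed shape. By Proposition~\ref{prop:permute-compound}, for each $j$ with $0\le j\le k$ the sequence $\widehat{G}:=\rho_j(G)=(g_j,g_{j-1},\dots,g_0,g_{j+1},\dots,g_k)$ is smooth with leading term $\widehat{g}_0=g_j$ and $c$ values $(\widehat{c}_1,\dots,\widehat{c}_k)=(b_j,\dots,b_1,a_{j+1},\dots,a_k)$. Since $\rho_j$ merely reorders the generators, $\gens{\widehat{G}}=\gens{G}=S$ and $\gcd(\widehat{G})=\gcd(G)=1$, so $S$ is free with respect to $\widehat{G}$ and the complement $\NR$ is unchanged. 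Thus $\widehat{G}$ is a legitimate input for Corollary~\ref{cor:smooth-tuenter}.

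First I would write out Corollary~\ref{cor:smooth-tuenter} for $\widehat{G}$. The left side is $\sum_{n\in\NR}[f(n+\widehat{g}_0)-f(n)]=\sum_{n\in\NR}[f(n+g_j)-f(n)]$, which already matches the target, and the subtracted term $\sum_{n=0}^{\widehat{g}_0-1}f(n)=\sum_{n=0}^{g_j-1}f(n)$ matches as well. The only work remaining is to rewrite the main multiple sum
\[\sum_{\widehat{n}_1=0}^{\widehat{c}_1-1}\cdots\sum_{\widehat{n}_k=0}^{\widehat{c}_k-1}f\!\left(\sum_{i=1}^k \widehat{n}_i\widehat{g}_i\right)\]
in terms of the original generators $g_0,\dots,g_k$.

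Next I would unwind the two index blocks produced by $\rho_j$. For $1\le i\le j$ one has $\widehat{g}_i=g_{j-i}$ and $\widehat{c}_i=b_{j-i+1}$, while for $j<i\le k$ one has $\widehat{g}_i=g_i$ and $\widehat{c}_i=a_i$. I would split $\sum_{i=1}^k\widehat{n}_i\widehat{g}_i$ along these two blocks and apply the substitution $\ell=j-i$ to the first block, setting $n_\ell:=\widehat{n}_{j-\ell}$, so that $0\le n_\ell<b_{\ell+1}$ for $0\le\ell<j$; on the second block I would simply keep $n_i:=\widehat{n}_i$, so that $0\le n_i<a_i$ for $j<i\le k$. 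Under this relabeling the argument of $f$ becomes $\sum_{i=0,\,i\neq j}^k n_ig_i$, and the product of summation ranges becomes exactly $\sum_{n_0=0}^{b_1-1}\cdots\sum_{n_{j-1}=0}^{b_j-1}\sum_{n_{j+1}=0}^{a_{j+1}-1}\cdots\sum_{n_k=0}^{a_k-1}$. Assembling the three matched pieces gives the stated identity.

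I expect no serious obstacle, since all the substance has been front-loaded into Corollary~\ref{cor:smooth-tuenter} and Proposition~\ref{prop:permute-compound}. The one point demanding genuine care is the bookkeeping in the reversal $\ell=j-i$: one must verify that the lower limit $i=1$ corresponds to the top index $\ell=j-1$, that $i=j$ corresponds to $\ell=0$, and that the bound $b_{j-i+1}$ transforms into $b_{\ell+1}$, so that the reversed first block reproduces the \emph{ascending} ranges $b_1,\dots,b_j$ in the order claimed rather than in reverse.
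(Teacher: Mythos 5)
Your proposal is correct and is exactly the paper's argument: the paper proves this corollary precisely by combining Corollary~\ref{cor:smooth-tuenter} applied to $\rho_j(G)$ with Proposition~\ref{prop:permute-compound}, and your reindexing $\ell = j-i$ (with the checks that $i=j$ gives $\ell=0$ and $b_{j-i+1}$ becomes $b_{\ell+1}$) just makes explicit the bookkeeping the paper leaves to the reader.
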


\section{Assorted Sylvester sums from free numerical semigroups} \label{sec:sylvester sum}

We apply Corollary~\ref{cor:smooth-tuenter} to compute certain properties of $\NR$, the complement  of a free numerical semigroup. 

\subsection{Sylvester sums}
\begin{defn}
A numerical semigroup $S$ with Frobenius number $F(S)$ is \emph{symmetric} if for all $n\in\mathbb{Z}$ the set $\{n, F(S)-n\}$ contains exactly one element of $S$.
\end{defn}

\begin{prop}[{\cite[Lemma 2.14 and Corollary 4.5]{RosalesGarciaSanchez09}}]
If $S$ is a numerical semigroup, then $F(S)\le 2g(S)-1$, with equality if and only if $S$ is symmetric.
\end{prop}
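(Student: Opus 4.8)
The plan is to exploit the reflection involution $\sigma\colon n\mapsto F-n$ (where $F=F(S)$) on the finite interval $L=\{0,1,\dots,F\}$, which pairs each integer with its mirror image about $F/2$. The entire argument rests on one closure observation: for any integer $n$, the numbers $n$ and $F-n$ cannot both lie in $S$, since otherwise their sum $n+(F-n)=F$ would lie in $S$, contradicting $F=F(S)\notin S$. Thus each pair $\{n,F-n\}$ contains at most one element of $S$.

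For the inequality, I would set $A=L\cap S$ and $B=L\setminus S$. Because $0\in S$ and every gap of $S$ lies in $\{1,\dots,F\}$, the set $B$ is exactly the gap set $\NR$, so $|B|=g(S)$ and $|A|=(F+1)-g(S)$. The closure observation says $\sigma$ sends $A$ into $B$, and $\sigma$ is injective, so $|A|\le|B|$; that is, $(F+1)-g(S)\le g(S)$, which rearranges to $F(S)\le 2g(S)-1$.

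For the equality case I would argue that, since $\sigma$ is a bijection of $L$ onto itself with $\sigma(A)\subseteq B$, the containment is an equality $\sigma(A)=B$ precisely when $|A|=|B|$, and by the counts above this happens exactly when $F(S)=2g(S)-1$. It then remains to match the condition $\sigma(A)=B$ with symmetry. First I would clear away the integers outside $L$: if $n<0$ then $n\notin S$ while $F-n>F$ forces $F-n\in S$, and similarly for $n>F$, so such pairs automatically meet $S$ in exactly one point. Hence symmetry is equivalent to requiring that every pair $\{n,F-n\}$ with $n\in L$ meet $S$ in exactly one element; combined with ``at most one'' from the closure step, this is the same as ``at least one,'' i.e.\ no pair consists of two gaps. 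Spelled out, that says every gap $b$ has $F-b\in S$, i.e.\ $\sigma(B)\subseteq A$, which for the involution $\sigma$ is equivalent to $\sigma(A)=B$. Chaining these equivalences yields ``$S$ symmetric if and only if $F(S)=2g(S)-1$.''

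The main obstacle is not a hard estimate but the careful bookkeeping of the equality case: one must verify that the symmetry hypothesis, quantified over all of $\mathbb{Z}$, truly reduces to a statement about the finite interval $L$, and that each link in the chain $|A|=|B|\Leftrightarrow\sigma(A)=B\Leftrightarrow S\text{ symmetric}$ correctly invokes the involution property together with the already-established ``at most one'' fact. The inequality itself should fall out immediately once the closure observation is recorded, and notably the whole argument is self-contained, requiring none of the Ap\'ery-set machinery developed earlier in the paper.
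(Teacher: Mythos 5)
Your proof is correct, including the delicate points: the ``at most one'' closure observation, the reduction of the symmetry condition from all of $\mathbb{Z}$ to the interval $L$, and the equivalence $\sigma(B)\subseteq A \Leftrightarrow \sigma(A)=B$ for the involution (which also correctly rules out the midpoint $F/2$ being in $S$ when $F$ is even). Note that the paper itself offers no proof of this proposition --- it is quoted from the cited reference \cite[Lemma 2.14 and Corollary 4.5]{RosalesGarciaSanchez09} --- and your reflection-and-counting argument is essentially the standard one given there, so there is no substantive difference in approach to report.
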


If $S$ is a free numerical semigroup, then it is symmetric, and there are explicit formulas for $g(S)$ and $F(S)$.

\begin{prop}[{\cite[Theorem D]{Brauer1962} and \cite[pages 15--16]{Selmer1977}}]\label{prop:brauer-selmer}
Let $S$ be a free numerical semigroup with smooth generating sequence $G=(g_0,\dots,g_k)$ and corresponding $c$ values $(c_1,\dots,c_k)$. Then $S$ is a symmetric numerical semigroup with \[F(S)=-g_0+\sum_{i=1}^k (c_i-1)g_i\] and \[g(S)=\frac{1}{2}\left(1-g_0+\sum_{i=1}^k (c_i-1)g_i\right).\] 
\end{prop}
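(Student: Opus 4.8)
The plan is to establish the two formulas together by computing the relevant sums using the explicit description of $\Ap(S;g_0)$ available for smooth sequences, then verify symmetry as a consequence. First I would derive the Frobenius number from Equation~\eqref{eqn:brauer-frobenius}, which states $F(S)=\max(\Ap(S;g_0))-g_0$. By Proposition~\ref{prop:leher-smooth-apery-repns}, we know $\Ap(S;g_0)=\left\{\sum_{i=1}^k n_ig_i : 0\le n_i<c_i\right\}$. Since all $g_i$ are positive, this sum is maximized by taking each $n_i$ as large as possible, namely $n_i=c_i-1$, giving $\max(\Ap(S;g_0))=\sum_{i=1}^k(c_i-1)g_i$. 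Subtracting $g_0$ yields the claimed formula for $F(S)$.

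Next I would compute the genus using Equation~\eqref{eqn:selmer-genus}, $g(S)=-\frac{g_0-1}{2}+\frac{1}{g_0}\sum_{n\in\Ap(S;g_0)}n$. Here the main computation is the sum $\sum_{n\in\Ap(S;g_0)}n = \sum_{n_1=0}^{c_1-1}\cdots\sum_{n_k=0}^{c_k-1}\left(\sum_{i=1}^k n_ig_i\right)$. By linearity this separates over $i$: for each $i$ the factor $g_i$ is multiplied by $\left(\sum_{n_i=0}^{c_i-1}n_i\right)\prod_{j\ne i}c_j = \frac{c_i-1}{2}c_i\prod_{j\ne i}c_j = \frac{c_i-1}{2}\prod_{j}c_j$. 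Since $\prod_{j=1}^k c_j = \prod_{j=1}^k d_{j-1}/d_j = d_0/d_k = g_0/1 = g_0$ (using $d_0=\gcd(g_0)=g_0$ and $d_k=\gcd(G)=1$), each term contributes $g_0\cdot\frac{c_i-1}{2}g_i$. Thus $\sum_{n\in\Ap(S;g_0)}n = \frac{g_0}{2}\sum_{i=1}^k(c_i-1)g_i$. Substituting into Selmer's formula gives $g(S)=-\frac{g_0-1}{2}+\frac{1}{2}\sum_{i=1}^k(c_i-1)g_i = \frac{1}{2}\left(1-g_0+\sum_{i=1}^k(c_i-1)g_i\right)$, as desired.

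Finally, to establish symmetry, I would observe that the two formulas just derived give exactly $F(S)=2g(S)-1$, since $2g(S)-1 = \left(1-g_0+\sum_{i=1}^k(c_i-1)g_i\right)-1 = -g_0+\sum_{i=1}^k(c_i-1)g_i = F(S)$. By the preceding Proposition (the bound $F(S)\le 2g(S)-1$ with equality iff symmetric), attaining equality immediately forces $S$ to be symmetric. This is the cleanest route: rather than verifying the defining symmetry condition directly, I let the numerical coincidence of the two formulas do the work.

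I expect the only genuine subtlety to be the evaluation $\prod_{j=1}^k c_j = g_0$, which is the linchpin of the genus computation; it follows from the telescoping of $c_j=d_{j-1}/d_j$ together with the boundary identifications $d_0=g_0$ and $d_k=1$ (the latter encoding that $G$ generates a numerical semigroup, i.e. $\gcd(G)=1$). Everything else is a routine separation of a product of arithmetic sums, so the genus formula is really the heart of the argument; the Frobenius formula and the symmetry statement then follow with little additional effort.
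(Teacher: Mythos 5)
Your proof is correct, but note that the paper itself offers no proof of this proposition at all: it is quoted as a known result, with the Frobenius and genus formulas attributed to Brauer--Shockley and Selmer and the symmetry to the theory of free semigroups. What you have done is reassemble that external result from ingredients the paper does state: the Brauer--Shockley formula \eqref{eqn:brauer-frobenius}, Selmer's formula \eqref{eqn:selmer-genus}, the explicit description $\Ap(S;g_0)=\{\sum_{i=1}^k n_ig_i : 0\le n_i<c_i\}$ following Proposition~\ref{prop:leher-smooth-apery-repns}, and the characterization of symmetry via $F(S)=2g(S)-1$. All the steps check out: the maximum of the Ap\'ery set is attained at $n_i=c_i-1$ because every $g_i>0$; the sum over the Ap\'ery set legitimately becomes an iterated sum over tuples because Leher's representation is unique (so the tuple-to-element map is a bijection); and the telescoping identity $\prod_{j=1}^k c_j = d_0/d_k = g_0$ uses exactly the normalization $\gcd(G)=1$ built into the definition of a free numerical semigroup (the paper itself uses $g_0=c_1\cdots c_k$ later, in the alternating-sum computation). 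Deducing symmetry from the coincidence $F(S)=2g(S)-1$ rather than from the definition is a clean and standard shortcut, and it is available here since the paper quotes that equivalence. In short: your argument is a valid, self-contained derivation of a result the paper merely cites, and it is essentially the classical argument specialized to free semigroups; its one structural dependence worth flagging is that Proposition~\ref{prop:leher-smooth-apery-repns} and the surrounding machinery appear in Section~\ref{sec:smooth}, so within the paper's logical ordering your proof uses tools developed after the statement is first invoked, which is harmless only because none of those tools rely on Proposition~\ref{prop:brauer-selmer} itself (it is first used in Section~\ref{sec:sylvester sum}).
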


\begin{defn}
For a numerical semigroup $S$ with generating set $G$, let $\NR(G)=\mathbb{N}_0\setminus S$. The $m$th power Sylvester sum $S_m(G)$ is defined as \[S_m(G)=\sum_{n\in\NR(G)}n^m.\]
\end{defn}

With this notation, the genus is the $0$th power sum: $g(S)=S_0(G)$. If $S$ is symmetric (as free numerical semigroups are), then $F(S)=2S_0(G)-1$.

We can use Corollary~\ref{cor:smooth-tuenter} with $f(n)=n^{m+1}$ to get an explicit formula for $S_m(G)$. We give the results for the first few values of $m$. For notation, given a sequence $G=(g_0,\dots,g_k)$ and some $e\in\mathbb{N}_0$, let $G^e:=(g_0^e,\dots,g_k^e)$.

\begin{prop}\label{prop:sylvester-formulas}
Let $G=(g_0,\dots,g_k)$ be a smooth sequence with corresponding $c$ values $(c_1,\dots,c_k)$. Then
\begin{align*}
S_0(G) &= \frac{1}{2}\left(1-g_0+\sum_{i=1}^k (c_i-1)g_i \right), \\
S_1(G) &= \frac{S_0(G)^2-S_0(G)}{2}+\frac{S_0(G^2)}{12}, \\
S_2(G) &= \frac{2S_0(G)-1}{6}\left(S_0G)^2-S_0(G)+\frac{S_0(G^2)}{2}\right).
\end{align*}
\end{prop}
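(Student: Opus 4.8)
The plan is to apply Corollary~\ref{cor:smooth-tuenter} with $f(n)=n^{m+1}$ and extract a recursion for $S_m(G)$. Expanding the left-hand side by the binomial theorem, the top-degree term $n^{m+1}$ cancels, leaving
\[
\sum_{n\in\NR}\bigl[(n+g_0)^{m+1}-n^{m+1}\bigr] = \sum_{j=0}^{m}\binom{m+1}{j}g_0^{\,m+1-j}\,S_j(G).
\]
The $j=m$ summand is $(m+1)g_0\,S_m(G)$, so once the right-hand side of Corollary~\ref{cor:smooth-tuenter} is evaluated, I can solve for $S_m(G)$ in terms of the lower sums $S_0(G),\dots,S_{m-1}(G)$. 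This reduces the problem to computing two explicit quantities: the box sum $T_{m+1}:=\sum_{n_1=0}^{c_1-1}\cdots\sum_{n_k=0}^{c_k-1}\bigl(\sum_{i=1}^k n_ig_i\bigr)^{m+1}$ and the interval sum $\sum_{n=0}^{g_0-1}n^{m+1}$, the latter being a standard Faulhaber power sum.

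For the base case $m=0$, the identity $S_0(G)=g(S)$ is precisely the genus formula of Proposition~\ref{prop:brauer-selmer}, which gives the first formula directly. For $m=1$, taking $f(n)=n^2$ makes the left-hand side $2g_0 S_1(G)+g_0^2 S_0(G)$, so
\[
2g_0\,S_1(G) = T_2 - \sum_{n=0}^{g_0-1}n^2 - g_0^2\,S_0(G).
\]
For $m=2$, taking $f(n)=n^3$ yields $3g_0 S_2(G)+3g_0^2 S_1(G)+g_0^3 S_0(G)$ on the left, and substituting the already-determined $S_0(G)$ and $S_1(G)$ isolates $S_2(G)$. I would carry out the $m=1$ case first and then feed its output into the $m=2$ recursion.

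The technical heart of the argument, and the step I expect to be the main obstacle, is evaluating and simplifying the box sums $T_2$ and $T_3$. Expanding the multinomial $\bigl(\sum_i n_ig_i\bigr)^p$ and summing each monomial over the product range factors the computation into one-dimensional power sums via the identities $\sum_{n=0}^{c-1}n=\tfrac{c(c-1)}{2}$, $\sum_{n=0}^{c-1}n^2=\tfrac{(c-1)c(2c-1)}{6}$, and $\sum_{n=0}^{c-1}n^3=\tfrac{c^2(c-1)^2}{4}$. The outputs are polynomials in the $g_i$ and $c_i$, and the key simplification is to recognize the recurring combinations $-g_0+\sum_{i=1}^k(c_i-1)g_i=2S_0(G)-1$ together with its analogue $-g_0^2+\sum_{i=1}^k(c_i-1)g_i^2=2S_0(G^2)-1$, obtained by substituting $G^2=(g_0^2,\dots,g_k^2)$ into the genus formula. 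Collecting the diagonal and cross terms of the multinomial expansion so that they recombine into the $S_0(G)^2$ and $S_0(G^2)$ contributions, and dividing by the appropriate multiple of $g_0$, then yields the stated closed forms.
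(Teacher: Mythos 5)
Your strategy coincides with the paper's proof: the same test functions $f(n)=n^{m+1}$ in Corollary~\ref{cor:smooth-tuenter}, the left side expanding to $\sum_{j=0}^{m}\binom{m+1}{j}g_0^{\,m+1-j}S_j(G)$, the base case $S_0(G)=g(S)$ from Proposition~\ref{prop:brauer-selmer}, and the box sums evaluated by multinomial expansion into one-dimensional power sums, then solved recursively for $S_1(G)$ and $S_2(G)$. However, the step you single out as the key simplification contains a genuine error, not a typo. You state the identity $-g_0^2+\sum_{i=1}^k(c_i-1)g_i^2=2S_0(G^2)-1$, ``obtained by substituting $G^2$ into the genus formula.'' Substituting $G^2$ into the genus formula requires the $c$ values \emph{of $G^2$}, and these are not $(c_1,\dots,c_k)$: since $\gcd(g_0^2,\dots,g_i^2)=\gcd(g_0,\dots,g_i)^2$, the $c$ values of $G^2$ are the squares $(c_1^2,\dots,c_k^2)$, so the correct identity (the one the paper uses) is
\[
\sum_{i=1}^k(c_i^2-1)g_i^2=2S_0(G^2)+g_0^2-1.
\]
The difference is fatal to your outline: combining the diagonal contribution $\frac{g_0}{6}\sum_i(c_i-1)(2c_i-1)g_i^2$ with the cross-term correction $-\frac{g_0}{4}\sum_i\bigl((c_i-1)g_i\bigr)^2$ produces exactly $\frac{g_0}{12}\sum_i(c_i^2-1)g_i^2$, and your identity gives you no way to fold this quantity into $S_0(G^2)$. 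Moreover, your identity is numerically false: for the compound sequence $G=(4,6,9)$ with $c$ values $(2,2)$, one has $G^2=(16,36,81)$ and $S_0(G^2)=168$, so $2S_0(G^2)+g_0^2-1=351=\sum_i(c_i^2-1)g_i^2$, whereas $\sum_i(c_i-1)g_i^2=117$.

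A further caveat, which the paper's proof also leaves implicit but which your phrasing makes visible: rewriting $\sum_i(c_i^2-1)g_i^2$ as $2S_0(G^2)+g_0^2-1$ invokes the genus formula of Proposition~\ref{prop:brauer-selmer} for the sequence $G^2$, which is justified when $G^2$ is itself smooth. That is automatic when $G$ is compound (powers of compound sequences are compound), but not for arbitrary smooth $G$: for the paper's own example $G=(6,10,11)$, no ordering of $G^2=(36,100,121)$ satisfies the smoothness condition, and one can check that $g(\gens{36,100,121})=516$ while $\frac{1}{2}\bigl(1-g_0^2+\sum_i(c_i^2-1)g_i^2\bigr)=564$. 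So even after you repair the exponents, this step needs either the hypothesis that $G^2$ generates a free semigroup or a reading of $S_0(G^2)$ as that formula value rather than as a genus.
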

\begin{proof}
We first note that $S_0(G)=g(S)$, and the formula for $g(S)$ is given in Proposition \ref{prop:brauer-selmer}.

For $S_1(G)$, we follow the proof of \cite[Proposition 3.5]{GassertShor16} which was done for the case of $G$ a compound sequence. Let $f(n)=n^2$.  Then by Corollary~\ref{cor:smooth-tuenter}, 
\[\sum\limits_{n\in \NR}((n+g_0)^2-n^2) = \sum\limits_{n_1=0}^{c_1-1}\dots \sum\limits_{n_k=0}^{c_k-1}
\left(\sum\limits_{i=1}^{k} n_i  g_i\right)^2 - \sum_{n=0}^{g_0 - 1}n^2.\]
Simplifying both sides, we have
\begin{align}\label{eq:main-equation} g_0^2\,S_0(G)+2g_0S_1(G)
=  \sum\limits_{n_1=0}^{c_1-1}\dots \sum\limits_{n_k=0}^{c_k-1}
\left(\sum\limits_{i=1}^{k} n_i  g_i\right)\left(\sum\limits_{j=1}^{k} n_j g_j\right) - \frac{(g_0-1)g_0(2g_0-1)}{6}.\end{align}

Since there are a finite number of summations, each with a finite number of terms, we can change the order to evaluate the summations over $n_1,\dots,n_k$ first.  We will break them up into the cases where $i=j$ and $i\neq j$.  If $i=j$, we get \[\sum_{n_1=0}^{c_1-1}\dots \sum_{n_k=0}^{c_k-1} \sum_{i=1}^{k}n_i^2g_i^2 = g_0 \sum_{i=1}^{k}\frac{(c_{i}-1)(2c_{i}-1)g_i^2}{6}.\]  
If $i\neq j$, we get 
\begin{align*}
\sum_{n_1=0}^{c_1-1}\dots \sum_{n_k=0}^{c_k-1} \sum_{i=1}^{k}n_i g_i\sum_{j=1,j\neq i}^{k} n_j g_j
&= g_0 \sum_{i=1}^{k}\frac{(c_{i}-1)g_i}{2}\sum_{j=1,j\neq i}^{k}\frac{(c_{j}-1) g_j}{2}\\
&=\frac{g_0}{4}\left(\sum_{i=1}^{k}(c_{i}-1)g_i \right)^2 - \frac{g_0}{4}\sum_{i=1}^{k}((c_{i}-1)g_i)^2.
\end{align*}

Note that $\sum_{i=1}^{k}(c_{i}-1)g_i = 
2 S_0(G)+g_0-1.$ Similarly, $\sum_{i=1}^{k}(c_{i}^2-1)g_i^2 = 2 S_0(G^2)+g_0^2-1.$

The right side of equation~\eqref{eq:main-equation} simplifies to 
\[\frac{g_0}{12}(2 S_0(G^2)+g_0^2-1)+\frac{g_0}{4}(2S_0(G)+g_0-1)^2-\frac{(g_{0}-1)g_0(2g_0-1)}{6}.\]  We then subtract $g_0^2S_0(G)$ and divide through by $2g_0$ to obtain the following.
\[S_1(G)=\sum_{n\in \NR} n =\frac{S_0(G)^2-S_0(G)}{2}+\frac{S_0(G^2)}{12}.\]

The approach for $S_2(G)$, which uses $f(n)=n^3$, is identical to the $S_1(G)$ case. Similar to the proof of \cite[Proposition 3.5]{GassertShor16}, we choose to omit the details here because the ideas are the same and the simplifications are more complicated. The reader is encouraged to verify the result.
\end{proof}

As in \cite[Proposition 3.5]{GassertShor16}, we note that $S_2(G)$ is a function of $S_0(G)$ and $S_1(G)$.  In particular, \[S_2(G) = \frac{2S_0(G)-1}{3}\left(3S_1(G)-S_0(G)^2+S_0(G)\right). \]

\subsection{Alternating Sylvester sums}

\begin{defn}
For a numerical semigroup $S$ with generating set $G$, let $\NR(G)=\mathbb{N}_0\setminus S$.  The alternating $m$th power Sylvester sum $T_m(G)$ is defined as 
\[T_m(G)=\sum_{n\in\NR(G)}(-1)^n n^m.\]
\end{defn}

In \cite{WangWang2008}, for the case of $G=\{a,b\}$,  the authors use Corollary~\ref{cor:tuenter} with $f(n)=(-1)^n n^m$ to find recurrences for $T_m(G)$ along with explicit formulas for $m=0,1,2$.  Since we must have $\gcd(G)=1$, we may assume that $b$ is odd and that $a$ is either even or odd.

For notation, given non-negative integers $m$ and $g$, let 
\[\sigma_m(g) = \sum_{n=0}^g n^m \atext{and} \tau_m(g) = \sum_{n=0}^g(-1)^n n^m,\] 
the $m$th power and alternating $m$th power sums of integers in the interval $[0,g]$.

\begin{prop}[{\cite[Theorem 4.1]{WangWang2008}}]\label{prop:wangwang-recursive} 
Let $G=\{a,b\}$.

If $a$ is even and $b$ is odd, then 
\[T_m(G) = \frac{1}{2}\left(\tau_m(b-1)-a^m\sigma_m(b-1)-\sum_{i=0}^{m-1}\binom{m}{i}b^{m-i}T_i(G) \right)\] 
with the initial value $T_0(G)=-\frac{b-1}{2}$.

If $a$ and $b$ are both odd, then \[T_m(G) = \frac{1}{2}\left(\tau_m(b-1)-a^m\tau_m(b-1)-\sum_{i=0}^{m-1}\binom{m}{i}b^{m-i}T_i(G) \right)\] with the initial value $T_0(G)=0$. (Note that $T_m(G)$ is symmetric in $a$ and $b$ when they are both odd.)
\end{prop}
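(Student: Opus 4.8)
The plan is to apply Tuenter's identity (Corollary~\ref{cor:tuenter}) to the function $f(n) = (-1)^n n^m$, taking the \emph{odd} generator $b$ as the shift. Since $\gens{a,b} = \gens{b,a}$, the complement $\NR(G)$ is symmetric in the two generators, so Corollary~\ref{cor:tuenter} applied with the roles of $a$ and $b$ interchanged gives
\[\sum_{n \in \NR(G)} \bigl[f(n+b) - f(n)\bigr] = \sum_{n=1}^{b-1} \bigl[f(na) - f(n)\bigr].\]
Shifting by $b$ (rather than $a$) is the key choice: because $b$ is odd we have $(-1)^{n+b} = -(-1)^n$, which is precisely what forces a copy of $-2\,T_m(G)$ to appear on the left and hence produces a genuine recursion. (Shifting by $a$ when $a$ is even would give $(-1)^{n+a} = (-1)^n$ and no $T_m(G)$ term at all.)

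For the left-hand side, expanding $(n+b)^m$ by the binomial theorem and using that $b$ is odd yields
\[f(n+b) - f(n) = -(-1)^n\bigl[(n+b)^m + n^m\bigr] = -(-1)^n\Bigl[2n^m + \sum_{i=0}^{m-1}\binom{m}{i} b^{m-i} n^i\Bigr].\]
Summing over $n \in \NR(G)$ and recognizing each inner sum as a lower-order alternating Sylvester sum turns the left-hand side into $-2\,T_m(G) - \sum_{i=0}^{m-1}\binom{m}{i} b^{m-i} T_i(G)$, which already matches the recursive part of both claimed formulas. For the right-hand side, I would compute $f(na) = (-1)^{na}(na)^m$ and split on the parity of $a$: when $a$ is even $(-1)^{na} = 1$, giving $\sum_{n=1}^{b-1}\bigl[a^m n^m - (-1)^n n^m\bigr] = a^m\sigma_m(b-1) - \tau_m(b-1)$; when $a$ is odd $(-1)^{na} = (-1)^n$, giving $(a^m-1)\,\tau_m(b-1)$. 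In both cases the boundary term at $n=0$ vanishes for $m \ge 1$, which lets me extend the sums to include $n=0$ and thereby recognize $\sigma_m(b-1)$ and $\tau_m(b-1)$. Equating the two sides and solving the resulting linear equation for $T_m(G)$ produces the two displayed recurrences.

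The base case $m=0$ must be treated separately, since the reduction of $\sum_{n=1}^{b-1}$ to $\sigma_0$ and $\tau_0$ is obstructed by the $0^0$ term; here I would instead feed $f(n) = (-1)^n$ directly into the same identity. The left side becomes $-2\,T_0(G)$, while the right side is $\sum_{n=1}^{b-1}\bigl[1 - (-1)^n\bigr] = b-1$ when $a$ is even (using that $b-1$ is even) and $0$ when $a$ is odd, giving the stated initial values $T_0(G) = -\tfrac{b-1}{2}$ and $T_0(G) = 0$ respectively. I expect the main obstacle to be the parity bookkeeping --- correctly tracking the signs $(-1)^{n+b}$ and $(-1)^{na}$ and the vanishing of boundary terms --- rather than any deep difficulty; the symmetry of $T_m(G)$ in $a$ and $b$ noted in the both-odd case is automatic, since $\NR(G)$ depends only on $\gens{a,b}$.
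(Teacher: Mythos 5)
Your proposal is correct and takes essentially the same approach the paper attributes to the cited source: the paper quotes this result from \cite{WangWang2008} without reproducing a proof, noting only that the authors there "use Corollary~\ref{cor:tuenter} with $f(n)=(-1)^n n^m$," which is precisely your argument. Your write-up supplies the intended details correctly --- in particular the key choice of shifting by the odd generator $b$ (so that $(-1)^{n+b}=-(-1)^n$ produces the $-2T_m(G)$ term), the parity split on $a$ for the right-hand side, and the separate handling of the $m=0$ base case.
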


\begin{cor}[{\cite[page 1331]{WangWang2008}}]\label{cor:wangwang-explicit}
When $G=\{a,b\}$, if $a$ is even and $b$ is odd, then
\begin{align*}
T_0(G) &= -\frac{b-1}{2},\\
T_1(G) &=\frac{1}{4}(b-1)(b-ab+1),\\
T_2(G) &=\frac{1}{12}ab(b-1)(a+3b-2ab);
\end{align*}
and if $a$ and $b$ are both odd, then
\begin{align*}
T_0(G) &=0,\\
T_1(G) &=-\frac{1}{4}(a-1)(b-1),\\
T_2(G) &=-\frac{1}{4}ab(a-1)(b-1).
\end{align*}
\end{cor}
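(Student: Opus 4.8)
The plan is to read off the explicit formulas directly from the recursion in Proposition~\ref{prop:wangwang-recursive}, evaluating it first at $m=1$ and then at $m=2$ and feeding in the lower-order sums already computed. Since that recursion expresses $T_m(G)$ in terms of $T_0(G),\dots,T_{m-1}(G)$ together with the auxiliary sums $\sigma_m(b-1)$ and $\tau_m(b-1)$, the only external inputs I need are closed forms for these auxiliary sums at $m=0,1,2$, and the initial values $T_0(G)$ supplied by the proposition.

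First I would record those closed forms. The power sums are standard: $\sigma_1(g)=\tfrac12 g(g+1)$ and $\sigma_2(g)=\tfrac16 g(g+1)(2g+1)$. For the alternating sums the key observation is that $b$ is odd, so $b-1$ is even; for even $g$ the usual pairing of consecutive terms gives $\tau_1(g)=\tfrac12 g$ and $\tau_2(g)=\tfrac12 g(g+1)$. Thus every auxiliary quantity appearing in the recursion becomes an explicit polynomial in $b$ (and, through the $a^m$ factors, in $a$).

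With these in hand, each case is a short substitution. In the case $a$ even, $b$ odd, the $m=1$ instance of the recursion reads
\[
T_1(G)=\tfrac12\!\left(\tau_1(b-1)-a\,\sigma_1(b-1)-b\,T_0(G)\right),
\]
and inserting $T_0(G)=-\tfrac12(b-1)$ and factoring out $\tfrac12(b-1)$ yields $\tfrac14(b-1)(b-ab+1)$. The $m=2$ instance reads
\[
T_2(G)=\tfrac12\!\left(\tau_2(b-1)-a^2\sigma_2(b-1)-b^2T_0(G)-2b\,T_1(G)\right),
\]
into which I substitute the values of $T_0(G)$ and $T_1(G)$ just found. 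The case $a,b$ both odd is handled identically, except that $\sigma_m(b-1)$ is replaced by $\tau_m(b-1)$ throughout and the initial value is $T_0(G)=0$; here the $m=1$ step collapses at once to $-\tfrac14(a-1)(b-1)$, and the $m=2$ step simplifies via the identity $(1-a^2)+(a-1)=-a(a-1)$.

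The only real obstacle is bookkeeping rather than ideas: the $m=2$ substitution in the even case produces four terms of differing degree in $b$ (from $\tau_2$, $\sigma_2$, $b^2T_0$, and $bT_1$), and recognizing the compact factored form $\tfrac{1}{12}ab(b-1)(a+3b-2ab)$ takes care. I would manage this by factoring out $\tfrac{1}{12}b(b-1)$ as early as possible, after which the bracketed remainder cancels down to $a(a+3b-2ab)$ and can be matched term by term against the claimed answer.
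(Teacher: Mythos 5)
Your proposal is correct: the closed forms for $\sigma_m$ and $\tau_m$ (using that $b-1$ is even), the substitutions into the $m=1$ and $m=2$ instances of Proposition~\ref{prop:wangwang-recursive}, and the final factorizations all check out. The paper itself gives no proof of this corollary, simply citing \cite{WangWang2008}, but deriving it from the recursion in Proposition~\ref{prop:wangwang-recursive} with the stated initial values is exactly the intended route, so your argument matches the paper's approach.
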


We can use Corollary~\ref{cor:smooth-tuenter} to generalize Corollary~\ref{cor:wangwang-explicit} for free numerical semigroups. For $G=(g_0,\dots,g_k)$ smooth, we use $f(n)=(-1)^n n^m$ if $g_0$ is odd and $f(n)=(-1)^n n^{m+1}$ if $g_0$ is even.

\begin{prop}
Let $G=(g_0,\dots,g_k)$ be a smooth sequence with corresponding $c$ values $(c_1,\dots,c_k)$.  Additionally, define $c_0:=1$.
Let $J:=\min\{j : 2\nmid g_j\}$ and let $I_G:=\{i : 2\mid g_i\}.$
Then 
\begin{align*}
T_0(G) &= \frac{1}{2}\left(1-\frac{c_Jg_J}{g_0}\prod_{i \in I_G} c_i \right),\\
T_1(G) &= \frac{(2S_0(G)-1)(2T_0(G)-1)-1}{4},\\
T_2(G) &= \frac{2T_0(G)-1}{12}\left(6S_0(G)^2-6S_0(G)+3S_0(G^2)+g_0^2-(c_Jg_J)^2-\sum_{i\in I_G}g_i^2(c_i^2-1) \right).
\end{align*}
\end{prop}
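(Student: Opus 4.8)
The plan is to feed the two test functions into Corollary~\ref{cor:smooth-tuenter} and read off a recursion for $T_m(G)$. Writing $f(n) = (-1)^n n^{m+1}$ when $g_0$ is even and $f(n) = (-1)^n n^m$ when $g_0$ is odd, the only thing that matters on the left-hand side is how the sign behaves under the shift by $g_0$: since $(-1)^{n+g_0} = (-1)^n$ for $g_0$ even and $(-1)^{n+g_0} = -(-1)^n$ for $g_0$ odd, a binomial expansion of $(n+g_0)^{m+1}$ (resp.\ $(n+g_0)^m$) turns $\sum_{n\in\NR}[f(n+g_0)-f(n)]$ into a linear combination of $T_0(G),\dots,T_m(G)$ whose coefficient of $T_m(G)$ is $(m+1)g_0$ (resp.\ $-2$). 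In particular this coefficient is nonzero, so once the right-hand side is known we solve recursively for $T_m(G)$ in terms of the lower sums.

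The substance of the argument is the evaluation of the right-hand side $\sum_{n_1=0}^{c_1-1}\cdots\sum_{n_k=0}^{c_k-1} f\bigl(\sum_{i=1}^k n_i g_i\bigr) - \sum_{n=0}^{g_0-1} f(n)$. The key structural observation is that $(-1)^{\sum_i n_i g_i} = \prod_{i:\,2\nmid g_i}(-1)^{n_i}$, so after expanding the relevant power of $\sum_i n_i g_i$ into monomials, each box-sum factors over the coordinates $n_1,\dots,n_k$. A coordinate with $g_i$ even contributes an ordinary moment $\sum_{n_i=0}^{c_i-1} n_i^p$ (in particular the bare factor $c_i$ when $p=0$), whereas a coordinate with $g_i$ odd contributes an alternating moment $\sum_{n_i=0}^{c_i-1}(-1)^{n_i} n_i^p$; for $p=0$ this last sum vanishes precisely when $c_i$ is even. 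First I would record the parity bookkeeping that governs which coordinates survive: from $d_i=\gcd(G_i)$ and $c_i=d_{i-1}/d_i$ one checks that if $g_0$ is odd then $J=0$ and every $c_i$ is odd, while if $g_0$ is even then $g_0,\dots,g_{J-1}$ are even, $g_J$ is odd, $c_J$ is even, and for every $i>J$ an odd $g_i$ forces $c_i$ odd. Thus $J$ is the \emph{unique} index at which $g_i$ is odd while $c_i$ is even, so in the $g_0$-even case only coordinate $J$ can carry an alternating contribution; the even coordinates collapse to $\prod_{i\in I_G} c_i$ and the remaining odd coordinates ($i>J$) contribute factors of $1$. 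This single fact explains the appearance of both $c_J g_J$ and $\prod_{i\in I_G} c_i$ and reduces every box-sum to a short product of closed forms.

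With this in hand I would carry out $m=0$ in full. When $g_0$ is odd the right-hand side is $\prod_{i\in I_G} c_i - 1$ and the recursion reads $-2T_0(G) = \prod_{i\in I_G} c_i - 1$; when $g_0$ is even only the $j=J$ term of the single power survives, giving right-hand side $\tfrac{g_0}{2} - \tfrac{c_J g_J}{2}\prod_{i\in I_G} c_i$ and recursion $g_0 T_0(G)$ equal to this quantity, using $\sum_{n=0}^{c-1}(-1)^n n = -c/2$ for even $c$ together with the boundary evaluations $\sum_{n=0}^{g_0-1}(-1)^n = 1$ and $\sum_{n=0}^{g_0-1}(-1)^n n = -g_0/2$. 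Both collapse to the stated formula for $T_0(G)$. For $m=1$ and $m=2$ the same scheme applies: expand $\bigl(\sum_i n_i g_i\bigr)^2$ and $\bigl(\sum_i n_i g_i\bigr)^3$ into monomials, evaluate each surviving box-sum via the parity lemma and the closed forms for $\sum_{n=0}^{c-1}(-1)^n n^p$ and $\sum_{n=0}^{c-1} n^p$ with $p\le 3$, and substitute into the recursion. To reach the compact stated expressions I would rewrite the emerging symmetric functions of the $c_i g_i$ in terms of $S_0(G)$, $S_0(G^2)$, and $T_0(G)$ exactly as in the proof of Proposition~\ref{prop:sylvester-formulas}, using $\sum_i (c_i-1)g_i = 2S_0(G)+g_0-1$ and $\sum_i (c_i^2-1)g_i^2 = 2S_0(G^2)+g_0^2-1$.

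The hard part will be the combinatorial bookkeeping in the right-hand side rather than any single estimate. The conceptual crux is the parity lemma isolating $J$: once it is established that $J$ is the only index where oddness of $g_i$ coincides with evenness of $c_i$, the box-sums for $T_0$, $T_1$, $T_2$ collapse to products of $c_i$'s decorated by a handful of alternating moments, and what remains is a tedious but mechanical simplification into the stated closed forms — most delicate for $T_2$, where the cross terms $n_i n_j$ in the expansion of the cube must be tracked and the parity of each coordinate consulted. The two-generator specialization recovers Corollary~\ref{cor:wangwang-explicit}, which furnishes a useful check at each stage.
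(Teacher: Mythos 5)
Your proposal is correct and follows essentially the same route as the paper: apply Corollary~\ref{cor:smooth-tuenter} with $f(n)=(-1)^n n^m$ for $g_0$ odd and $f(n)=(-1)^n n^{m+1}$ for $g_0$ even, factor the box-sums via $(-1)^{\sum_i n_ig_i}=\prod_{2\nmid g_i}(-1)^{n_i}$, use the parity facts that $c_J$ is even with $g_J$ odd while every other odd-$g_i$ coordinate has $c_i$ odd, and solve the resulting relation for $T_m(G)$, carrying out $m=0$ in full and treating $m=1,2$ as analogous. Your explicit ``parity lemma'' isolating $J$ and the Wang--Wang-style recursion framing of the left-hand side merely make explicit what the paper leaves implicit, so the two arguments coincide in substance.
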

\begin{proof}
For each $m$, we proceed with two cases depending on whether $g_0$ is odd or even. We present the proof for $m=0$ here. The proofs for $m=1$ and $m=2$ are similar. 

Case 1. Suppose $g_0$ is odd.  We will use $f(n)=(-1)^n$ with Equation~\eqref{eqn:smooth-tuenter}.  The left side simplifies to $-2\sum_{n\in\NR}(-1)^n$.  
To simplify the right side, first note that 
\begin{align*}
\sum\limits_{n_1=0}^{c_1-1}\cdots\sum\limits_{n_k=0}^{c_k-1}(-1)^{n_1g_1+\cdots+n_kg_k} &= \sum\limits_{n_1=0}^{c_1-1}\cdots\sum\limits_{n_k=0}^{c_k-1}\left[(-1)^{n_1g_1}\cdots(-1)^{n_kg_k}\right] \\ 
&= \left(\sum\limits_{n_1=0}^{c_1-1}(-1)^{n_1g_1}\right)\cdots\left(\sum\limits_{n_k=0}^{c_k-1}(-1)^{n_kg_k}\right)\\
&=\prod\limits_{i=1}^k\left(\sum\limits_{n_i=0}^{c_i-1}(-1)^{n_ig_i}\right).
\end{align*}
The right side of Equation~\eqref{eqn:smooth-tuenter} is then \begin{align*}
\sum\limits_{n_1=0}^{c_1-1}\cdots\sum\limits_{n_k=0}^{c_k-1} (-1)^{n_1g_1+\cdots+n_kg_k} - \sum_{n=0}^{g_0-1} (-1)^n & = \prod_{i=1}^k\left(\sum_{n_i=0}^{c_i-1}(-1)^{n_ig_i}\right) - \sum_{n=0}^{g_0-1} (-1)^n.\end{align*} Since $g_0=c_1\cdots c_k$ and $g_0$ is odd, all of the $c_i$ terms are odd, so the right side simplifies to
$-1+\prod_{i\in I_G}c_i.$ Equating the two sides of Equation~\eqref{eqn:smooth-tuenter}, we obtain \[\sum_{n\in\NR}(-1)^n = \frac{1}{2}\left(1-\prod_{i\in I_G}c_i\right) = \frac{1}{2}\left(1-\frac{c_Jg_J}{g_0}\prod_{i\in I_G}c_i\right),\] with the latter equality holding because $J=0$, which means $c_J g_J/g_0=1$.

Case 2. Suppose $g_0$ is even. By the definition of $J$, note that $g_0,\dots,g_{J-1}$ are even and $g_J$ is odd. Therefore since $d_{J-1}$ is even and $d_J$ is odd, $c_J=d_{J-1}/d_J$ is even. We will use $f(n)=n(-1)^n$ with Equation~\eqref{eqn:smooth-tuenter}.  The left side simplifies to $g_0\sum_{n\in\NR} (-1)^n.$  
The right side is 
\[\sum\limits_{n_1=0}^{c_1-1}\cdots\sum\limits_{n_k=0}^{c_k-1}(n_1g_1+\dots+n_kg_k)(-1)^{n_1g_1+\cdots+n_kg_k} - \sum_{n=0}^{g_0-1}n(-1)^n.\] Since $\sum_{n=0}^{g_0-1}n(-1)^n=-g_0/2$ when $g_0$ is even, the right side simplifies to
\begin{align*}
&\left(\sum\limits_{n_1=0}^{c_1-1}n_1g_1(-1)^{n_1g_1}\right)\left(\sum\limits_{n_2=0}^{c_2-1}(-1)^{n_2g_2}\right)\cdots \left(\sum\limits_{n_k=0}^{c_k-1}(-1)^{n_kg_k}\right) \\
&+\left(\sum\limits_{n_1=0}^{c_1-1}(-1)^{n_1g_1}\right)\left(\sum\limits_{n_2=0}^{c_2-1}n_2g_2(-1)^{n_2g_2}\right)\cdots \left(\sum\limits_{n_k=0}^{c_k-1}(-1)^{n_kg_k}\right) \\
&+\cdots\\
&+\left(\sum\limits_{n_1=0}^{c_1-1}(-1)^{n_1g_1}\right)\left(\sum\limits_{n_2=0}^{c_2-1}(-1)^{n_2g_2}\right)\cdots \left(\sum\limits_{n_k=0}^{c_k-1}n_kg_k(-1)^{n_kg_k}\right) \\ &+\frac{g_0}{2},
\end{align*}
which equals 
\[\sum_{j=1}^k\left(\sum_{n_j=0}^{c_j-1}n_jg_j(-1)^{n_jg_j}\prod_{i=1,i\ne j}^k\left(\sum_{n_i=0}^{c_i-1}(-1)^{n_ig_i}\right) \right)+\frac{g_0}{2}.\]
Since $c_J$ is even and $g_J$ is odd, $\sum_{n_J=0}^{c_J-1}(-1)^{n_Jg_J}=0$, so the product above is 0 for all $j\ne J$. The right side simplifies to
\[\sum_{n_J=0}^{c_J-1}n_Jg_J(-1)^{n_J}\prod_{i\in I_G}c_i + \frac{g_0}{2} = -\frac{g_Jc_J}{2}\prod_{i\in I_G}c_i + \frac{g_0}{2}.\] Equating the two sides of Equation~\eqref{eqn:smooth-tuenter}, we obtain \[\sum_{n\in\NR}(-1)^n = \frac{1}{2}\left(1-\frac{c_Jg_J}{g_0}\prod_{i\in I_G}c_i \right). \]
\end{proof}

\begin{remark}
Via Proposition \ref{prop:sylvester-formulas}, we can represent the formula for $T_2(G)$ in a few ways:
\begin{align*}
T_2(G) & = (2T_0(G)-1)\left(3S_1(G)-S_0(G)^2+S_0(G)+\frac{1}{12}\left(g_0^2-(c_Jg_J)^2-\sum_{i\in I_G}g_i^2(c_i^2-1) \right)\right)\\
&=(2T_0(G)-1)\left(\frac{3S_2(G)}{2S_0(G)-1}+\frac{1}{12}\left(g_0^2-(c_Jg_J)^2-\sum_{i\in I_G}g_i^2(c_i^2-1) \right)\right).
\end{align*}
\end{remark}

In particular, since $T_0(G)\le 0$, we see that $\NR$ always contains at least as many odd elements as even elements. The numbers of even and odd elements in $\NR$ are equal exactly when all elements of $G$ are odd.

If $g_0$ is odd, then $g_0=g_Jc_J$ and we can simplify the above formulas.

\begin{cor}
Suppose $G=(g_0,\dots,g_k)$ is a smooth sequence.

If $g_0$ is odd, then 
\begin{align*}
T_0(G)&=\frac{1}{2}\left(1-\prod_{i\in I_G}c_i\right),\\ 
T_1(G)&=-\frac{1}{4}\left(1+(2S_0(G)-1)\prod_{i\in I_G}c_i\right), \\
T_2(G)&=-\left(\frac{3S_2(G)}{2S_0(G)-1}-\frac{1}{12}\left(\sum_{i\in I_G}g_i^2(c_i^2-1)\right)\right)\prod_{i\in I_G}c_i.
\end{align*}  
If all terms of $G$ are odd, then
\begin{align*}
T_0(G)&=0,\\ 
T_1(G)&=-S_0(G)/2, \\
T_2(G)&=-S_0(G)(S_0(G)-1)/2-S_0(G^2)/4 \\
&= -3S_2(G) / (2S_0(G)-1)\\
& = -2T_1(G)^2-T_1(G)+\frac{1}{2}T_1(G^2).
\end{align*}  
\end{cor}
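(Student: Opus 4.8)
The plan is to obtain each line by specializing the three formulas of the preceding proposition and then, in the all-odd case, to rewrite $T_2(G)$ in its two alternative forms. First suppose $g_0$ is odd. Because $J=\min\{j:2\nmid g_j\}$, oddness of $g_0$ forces $J=0$, and the convention $c_0=1$ then gives $c_Jg_J=c_0g_0=g_0$; hence $c_Jg_J/g_0=1$ and $g_0^2-(c_Jg_J)^2=0$. Substituting these two facts into the preceding proposition produces the stated formulas directly: the factor $c_Jg_J/g_0$ disappears from $T_0(G)$; writing $P=\prod_{i\in I_G}c_i$ we have $2T_0(G)-1=-P$, which I insert into $T_1(G)=\frac{(2S_0(G)-1)(2T_0(G)-1)-1}{4}$; and for $T_2(G)$ I start from the form given in the remark following that proposition, where the vanishing of $g_0^2-(c_Jg_J)^2$ leaves exactly $-P\bigl(\frac{3S_2(G)}{2S_0(G)-1}-\frac{1}{12}\sum_{i\in I_G}g_i^2(c_i^2-1)\bigr)$.

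Next suppose every term of $G$ is odd, so $I_G=\emptyset$. Then the empty product $\prod_{i\in I_G}c_i$ equals $1$ and the empty sum $\sum_{i\in I_G}g_i^2(c_i^2-1)$ equals $0$; feeding these into the odd-$g_0$ formulas just derived yields $T_0(G)=0$, $T_1(G)=-S_0(G)/2$, and $T_2(G)=-3S_2(G)/(2S_0(G)-1)$ at once. To reach the second form of $T_2(G)$ I substitute the expression for $S_2(G)$ from Proposition~\ref{prop:sylvester-formulas}; the common factor $2S_0(G)-1$ cancels and leaves $-\tfrac12 S_0(G)(S_0(G)-1)-\tfrac14 S_0(G^2)$. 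For the third form I use $T_1(G)=-S_0(G)/2$ together with the corresponding identity $T_1(G^2)=-S_0(G^2)/2$ for the squared sequence, substitute $S_0(G)=-2T_1(G)$ and $S_0(G^2)=-2T_1(G^2)$ into the second form, and collect terms to obtain $-2T_1(G)^2-T_1(G)+\tfrac12 T_1(G^2)$.

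The step needing the most care is the identity $T_1(G^2)=-S_0(G^2)/2$. It is the all-odd $T_1$ formula (established in this very case) applied to $G^2=(g_0^2,\dots,g_k^2)$, whose terms are again all odd and whose $c$ values are $(c_1^2,\dots,c_k^2)$; to invoke it I must read $S_0(G^2)$ and $T_1(G^2)$ as the outputs of the formulas run on this squared data, which is precisely the reading of $S_0(G^2)$ already used inside the proof of Proposition~\ref{prop:sylvester-formulas}. I would therefore make that consistency explicit rather than treat $\gens{G^2}$ as an independent free semigroup to be analyzed anew. Everything else---the bookkeeping of the empty product and empty sum and the cancellation of $2S_0(G)-1$---is routine algebra.
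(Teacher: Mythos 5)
Your proof is, in essence, the paper's proof: the paper's entire argument for this corollary is the sentence immediately preceding it (``If $g_0$ is odd, then $g_0=g_Jc_J$ and we can simplify the above formulas''), and your substitutions --- $J=0$ and $c_Jg_J=g_0$ when $g_0$ is odd, then the empty product $\prod_{i\in I_G}c_i=1$ and empty sum when all terms are odd, followed by the algebra through Proposition~\ref{prop:sylvester-formulas} --- are exactly that computation, carried out correctly.

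The step you flagged, $T_1(G^2)=-S_0(G^2)/2$, is however more than a matter of making a convention explicit: your instinct that one cannot ``treat $\gens{G^2}$ as an independent free semigroup to be analyzed anew'' is mathematically forced, because the square of a smooth sequence need not be smooth, even when all terms are odd. Take $G=(15,21,19)$: it is smooth with all terms odd ($c_1=5$, $c_2=3$, and $5\cdot 21=7\cdot 15$, $3\cdot 19=15+2\cdot 21$), but $G^2=(225,441,361)$ has $c$ values $(25,9)$ while $9\cdot 361=3249\notin\gens{225,441}$ (equivalently, $361\notin\gens{25,49}$), so $G^2$ is not smooth. Worse, $3249\in\gens{G^2}$ and $3249\equiv 441\cdot 14\pmod{225}$ with $3249<441\cdot 14$, so the standard forms $441n_1+361n_2$ are not the Ap\'ery set of $225$ in $\gens{G^2}$, and by Equation~\eqref{eqn:selmer-genus} the genus of $\gens{G^2}$ is strictly smaller than $\frac12\bigl(1-g_0^2+\sum_i(c_i^2-1)g_i^2\bigr)$. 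Consequently, if $S_0(G^2)$ and $T_1(G^2)$ are read literally as sums over $\NZ\setminus\gens{G^2}$ --- which is what the paper's definitions of $S_m$ and $T_m$ say --- then ``apply the all-odd $T_1$ formula to $G^2$'' is an invalid step, and in fact the two displayed expressions for $T_2(G)$ that involve $G^2$ (and the $S_0(G^2)$ terms in Proposition~\ref{prop:sylvester-formulas} itself) are false for this $G$. The formal reading you propose --- taking $S_0(G^2)=\frac12\bigl(1-g_0^2+\sum_i(c_i^2-1)g_i^2\bigr)$ and $T_1(G^2)=-S_0(G^2)/2$ as the defined outputs of the formulas run on the squared data --- is the only reading under which the claims hold for all smooth $G$; under the literal reading one needs the additional hypothesis that $G^2$ is smooth, which does hold for compound sequences (the setting this corollary generalizes) but not in general. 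So state that reading as a definition rather than as a consistency remark; with that, your proof is complete, and the gap it closes is inherited from the paper, not introduced by you.
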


Suppose $G$ contains only odd terms.  As $T_2(G)$ is a function of $S_0(G)$, $S_1(G)$, and $S_2(G)$, it seems reasonable to ask whether in general $T_m(G)$ is a function of $S_0(G),\ldots,S_m(G)$ or even of $S_0(G),\ldots, S_{m-1}(G)$, since $S_2(G)$ is a function of $S_0(G)$ and $S_1(G)$. Equivalently, we ask if $T_m(G)$ is a function of $S_0(G),\dots,S_0(G^m)$ or of $T_1(G), \dots, T_1(G^m)$.

\subsection*{Acknowledgements} The authors would like to thank the anonymous referee for the careful reading of our manuscript and many helpful comments.

\bibliography{main-bib-file}
\bibliographystyle{plain}
\end{document}